\documentclass[11pt]{article}
\usepackage{a4,amssymb,amsthm,amsmath} 
\nonstopmode

\newcommand{\R}{{\mathbb R}} 

\newcommand{\abs}[1]{{\left| {#1} \right|}} \newcommand{\p}[1]{{\left(
      {#1} \right)}} 
\newcommand{\norm}[1]{\left \Vert {#1} \right \Vert}
\newcommand{\ip}[2]{{\left< {#1},{#2} \right>}}
\newcommand{\Oh}[1]{{O \p{#1}}}

\renewcommand{\Re}{\operatorname{Re}}
\newcommand{\re}{\operatorname{Re}}
\begin{document}
\def\cprime{$'$}

\theoremstyle{plain}
\newtheorem{thm}{Theorem}
\newtheorem{cor}{Corollary}
\newtheorem{lem}{Lemma}
\newtheorem{prop}{Proposition}

\theoremstyle{definition}
\newtheorem{example}{Example}
\newtheorem{rem}{Remark} 
\newtheorem{conj}{Conjecture}
\renewcommand{\theconj}{\hskip -4pt}
\newtheorem{prob}{Problem}
\newtheorem{vanlem}{Vanishing Lemma} 
\newtheorem{pecher}{Pechersky's rearrangement theorem}
\renewcommand{\thevanlem}{\hskip -4pt}
\renewcommand{\thepecher}{\hskip -4pt}

\author{Johan Andersson\thanks{Department of Mathematics, Stockholm University, SE-106 91 Stockholm SWEDEN, Email: {\texttt {johana@math.su.se}}.} \\ \\ {\em Dedicated to the memory of K. Ramachandra}}

\date{}

\title{On a problem of Ramachandra and approximation of functions by Dirichlet polynomials with bounded coefficients}

\maketitle
  
\begin{abstract}
 We prove effective results on when a function can be approximated by a  Dirichlet polynomial with bounded coefficients. Assuming that $\Phi(n)$ is an increasing function we prove that the set of polynomials
\begin{gather} \notag
  \left \{ \sum_{n=2}^N a_n n^{it-1}: N \geq 2, |a_n| \leq \Phi(n) \right \}, \\ \intertext{is dense in  $L^2(0,H)$ if and only if}
  \sum_{n=2}^\infty \frac{\log \Phi(n)} {n \log^2 n}  = \infty. \tag{$*$}
\end{gather}
We also prove variants of this result for generalized Dirichlet polynomials. The main tools are theorems of Paley and Wiener related to quasianalyticity and the Pechersky rearrangement theorem. 
 We use this result to give precise conditions on when a conjecture of Ramachandra is true and when it is false. We prove that whenever $\Phi(n)$ is a positive increasing function then
\begin{gather*} 
  \lim_{N \to \infty} \min_{|a_n| \leq \Phi(n)} \int_0^H \abs{1+\sum_{n=2}^N a_n n^{it-1}}^2 dt =0, 
\end{gather*}
if and only if the sum $(*)$ is divergent.
This has applications on lower bounds for moments of the Riemann zeta-functions in short intervals close to $\Re(s)=1$, and to questions of Universality for zeta-functions on and close to their abscissa of convergence.   
\end{abstract}

 \tableofcontents 

\section{A problem of Ramachandra}
\subsection{Ramachandra's original problem}
Ramachandra stated the following conjecture (\cite{Ramachandra2}, for related problems see
 \cite{BalRam})
\begin{conj} 
 Does there for each  $\delta>0$  exist a $H>0$ such that 
$$
\int_0^H \left|1+ \sum_{n=2}^N a_n n^{it} \right|^2 dt > \delta, 
$$
for all integers $N \geq 2$ and complex numbers $a_n$?
\end{conj}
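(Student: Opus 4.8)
The plan is to resolve Ramachandra's question in the negative: I claim that for \emph{every} $H>0$ and every $\delta>0$ there are an $N\ge 2$ and complex numbers $a_2,\dots,a_N$ with $\int_0^H\abs{1+\sum_{n=2}^N a_n n^{it}}^2\,dt<\delta$, so that no choice of $H$ can furnish the uniform lower bound the conjecture asks for. The first step is a reformulation: since each finite span $\operatorname{span}_{\C}\{2^{it},\dots,N^{it}\}$ is a closed subspace of $L^2(0,H)$ and these increase with $N$, the quantity $\inf_{N,(a_n)}\int_0^H\abs{1+\sum_{n=2}^N a_n n^{it}}^2\,dt$ is precisely the squared distance in $L^2(0,H)$ from the constant function $-1$ to the closed linear span $V_H$ of $\{n^{it}:n\ge 2\}$. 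Hence it is enough to show that $V_H=L^2(0,H)$ for every $H>0$, i.e.\ that the exponential system $\{e^{it\log n}:n\ge 2\}$ is complete in $L^2(0,H)$.

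I would prove this completeness in either of two ways. The first is self-contained, via Paley--Wiener: if $f\in L^2(0,H)$ is orthogonal to every $n^{it}$, extend $f$ by zero and set $F(z)=\int_0^H f(t)e^{-izt}\,dt$. Then $F$ is entire with $\abs{F(z)}\le\p{\int_0^H\abs f}e^{H\abs z}$, so it has exponential type $\le H$, and the orthogonality relations say exactly that $F(\log n)=0$ for all $n\ge 2$. But an entire function of finite exponential type has zero-counting function $n_F(r)=\Oh{r}$ (by Jensen's formula), whereas $\#\{n\ge 2:\log n\le r\}$ grows like $e^r$; so $F\equiv 0$, whence $f\equiv 0$ and the system is complete --- indeed wildly overcomplete on every interval. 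The second route is the one native to this paper: since $n^{it}=n\cdot n^{it-1}$, one has the identity of sets $\{\sum_{n=2}^N b_n n^{it-1}:\abs{b_n}\le\Phi(n)\}=\{\sum_{n=2}^N a_n n^{it}:\abs{a_n}\le\Phi(n)/n\}$, and the main theorem makes the left-hand set dense in $L^2(0,H)$ as soon as $(*)$ diverges; taking for instance $\Phi(n)=n$, for which $(*)=\sum 1/(n\log n)=\infty$, already gives density with the \emph{bounded} coefficients $\abs{a_n}\le 1$, a fortiori with arbitrary $a_n$.

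Either way, $-1\in V_H=L^2(0,H)$ for every $H>0$, which is exactly the negative answer to the conjecture. I do not expect a genuine obstacle in this argument once the framework is in place: the statement by itself is the (essentially classical) overcompleteness of $\{e^{it\log n}\}$ on a finite interval, forced because the counting function of $\{\log n\}$ grows faster than any entire function of exponential type can tolerate. The delicate point --- and the reason the paper brings in the Paley--Wiener quasianalyticity criterion together with Pechersky's rearrangement theorem --- is to pin down the sharp growth rate of the bound $\abs{a_n}\le\Phi(n)$ at which density (and hence the truth of the corresponding variant of the conjecture) switches; that is the content of the main theorem, which I am here assuming as given.
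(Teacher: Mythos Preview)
Your argument is correct and complete, but it does not follow the route the paper actually sketches for this statement. In Section~1.2 the paper outlines the author's \emph{second} disproof, which goes through Voronin universality of $\zeta(s)$: fix $\sigma\in(1/2,1)$, approximate $\zeta(\sigma+it)$ by its partial sum $\sum_{n\le N}n^{-\sigma-it}$, use universality on the segment $[\sigma,\sigma+iH]$ to find $T$ with $|\zeta(\sigma+iT+it)-\epsilon/3|<\epsilon/3$ for $0\le t\le H$, and conclude that $a_n=n^{iT-\sigma}$ makes $|1+\sum_{n\ge 2}a_n n^{it}|<\epsilon$ on $[0,H]$. Your first route---the Paley--Wiener/Jensen zero-counting argument showing $\{n^{it}\}_{n\ge 2}$ is complete in $L^2(0,H)$ because $\#\{n:\log n\le r\}\sim e^r$ overwhelms the $O(r)$ zero bound for an entire function of exponential type---is precisely what the paper alludes to as its ``first counterexample [which] used the Sz\'asz--M\"untz theorem and gives no estimates on the growth of $a_n$.'' So you have recovered the earlier, more elementary disproof rather than the one displayed here. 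The trade-off is exactly as the paper describes: your completeness argument is self-contained and needs nothing about $\zeta$, but produces no control on $|a_n|$; the universality argument is less elementary but already yields $|a_n|\le n^{c-1}$ for any $c>0$. Your second route, invoking Theorem~1 with $\Phi(n)=n$ to get $|a_n|\le 1$, is legitimate given that you explicitly take the main theorem as a black box, but of course it is not an independent proof of the conjecture's falsity---Theorem~1 is the paper's principal result and is established only later, using Pechersky's rearrangement theorem and the Paley--Wiener machinery in a much more delicate way.
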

If true, Ramachandra \cite{Ramachandra2} proved that it would have important applications on the Riemann zeta-function.

However, we proved  that this conjecture was  false in \cite{Andersson}. Our  first counterexample used the Szasz-M\"untz
theorem and gives no estimates on the growth of $a_n$. In private communication Ramachandra asked whether Conjecture 1  is still false if the $a_n$ can be assumed to be polynomially increasing. In our second counterexample we proved that we  can assume $$\abs{ a_n} \leq n^{c-1},$$ for any $c>0$. Our proof used  universality properties of the Riemann zeta function.

\subsection{Sketch of proof}
The key point is that the Riemann zeta-function can be estimated by a Dirichlet polynomial
\begin{gather} \notag \left|\zeta(\sigma+it)-\sum_{n=1}^N n^{-\sigma-it} \right| < \epsilon/3, \qquad N \asymp t, \, \, 1/2<\sigma<1.  \\
\intertext{Then we use universality (on the compact subset $K=[\sigma,\sigma+iH]$) to find a $T$ such that} 
\label{AB} \left| \zeta(\sigma+iT+it)-\epsilon/3 \right|<\epsilon/3,  \\ \intertext{for $0 \leq t \leq H$. It follows from  the triangle inequality that}
\notag \left| \sum_{n=1}^N a_n n^{it} \right|<\epsilon,  \qquad 0<t<H,
\end{gather}
for $a_n=n^{iT-\sigma}$, from which a negative answer to Ramachandra's problem follows.

This proof was given in \cite{Andersson} and also appears in Steuding \cite[Section 10.4]{Steuding} as an example of applications of Universality. We remark that the proof can be somewhat simplified if we may allow the function that we approximate to be $0$ and not $\epsilon/3$ in  \eqref{AB}. The reason why we use $\epsilon/3$ is because the version of Voronin universality theorem we used was a version proved by Bagchi \cite{Bagchi}, which required that the function we approximate is nonvanishing. In a recent paper \cite{Andersson2} we proved however, that in general we may allow the function to have zeroes in the interval.

\subsection{A refined Ramachandra problem}
Ramachandra's motivation probably lies somewhere in the various inequalities he and Balasubramanian  actually did prove. In particular in the context  of Weak Titchmarsh-series (see e.g. Ramachandra \cite{Ramachandra}) they proved that the conjecture is true if $\abs{a_n} \ll (\log n)^C$. Therefore, a natural question is.  For which increasing functions $\Phi(n)$ and $H>0$ is the following true:
\begin{gather*}\lim_{N \to \infty} \min_{|a_n| \leq \Phi(n)} \int_0^H \left|1+\sum_{n=2}^N a_n n^{it-1} \right|^2 dt > 0? \tag{*} \end{gather*}
As we have indicated
\begin{enumerate}
 \item (*) is true for $\Phi(n) \ll (\log n)^C$, and $C>0$.
 \item (*) is false for $\Phi(n) \gg  n^\delta$, and $\delta>0$.
\end{enumerate}
What about the intermediate cases? Our main aim is to solve this problem completely in terms of growth of the function $\Phi(n)$.

\subsection{A  solution to the refined Ramachandra problem}
 The following result  was first presented in a weaker form (for some $H>0$ instead of for all $H>0$) at the Zeta-Function-Days in Seoul, September 2009 and in its final form at the Tata institute in Mumbai one month later gives a final answer to the refined version of Ramachandra's problem:
\begin{thm} Suppose  $\Phi(n)$ is an increasing positive function and $H>0$. Then 
\begin{gather*}\lim_{N \to \infty} \min_{|a_n| \leq \Phi(n)} \int_0^H \left|1+\sum_{n=2}^N a_n n^{it-1} \right|^2 dt > 0,
  \\ \intertext{if and only if} 
  \sum_{n=2}^\infty  \frac {\log{\Phi(n)}}{n \log^2 n}<\infty.  
\end{gather*}
\end{thm}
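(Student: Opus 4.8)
The plan is to recast the statement as a separation problem in $L^2(0,H)$, dualize it into a question about entire functions of exponential type via Paley--Wiener, and then settle that question with the logarithmic integral (quasianalyticity) in one direction and a multiplier construction in the other; the Pechersky rearrangement theorem then produces the actual approximating polynomials in the divergent case. For each $N$ the set $S_N=\{\sum_{n=2}^{N}a_n n^{it-1}:\abs{a_n}\le\Phi(n)\}$ is a compact \emph{convex} subset of $L^2(0,H)$, so $\min_{\abs{a_n}\le\Phi(n)}\int_0^H\abs{1+\sum_{n=2}^N a_n n^{it-1}}^2\,dt=\operatorname{dist}(-1,S_N)^2$, which decreases to $\operatorname{dist}(-1,\overline S)^2$ where $S=\bigcup_N S_N$ is convex. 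Thus the limit is positive iff $-1\notin\overline S$, and by Hahn--Banach this holds iff some $g\in L^2(0,H)$ satisfies $\re\ip{-1}{g}>\sup_{\phi\in S}\re\ip{\phi}{g}$. Writing $\hat g(\xi)=\int_0^H g(t)e^{-it\xi}\,dt$ and using that the arguments of the $a_n$ are free, $\sup_{\phi\in S}\re\ip{\phi}{g}=\sum_{n=2}^\infty\frac{\Phi(n)}{n}\abs{\hat g(\log n)}$; after rotating $g$ by a unit scalar, the limit is therefore positive exactly when there is $g$ with $\abs{\hat g(0)}>\sum_{n=2}^\infty\frac{\Phi(n)}{n}\abs{\hat g(\log n)}$. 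By the Paley--Wiener theorem the functions $\hat g$ are, up to a modulation that leaves $\abs{\hat g}$ on $\R$ unchanged, the entire functions of exponential type $\le H$ in $L^2(\R)$; so the theorem reduces to deciding for which increasing $\Phi$ there is a nonzero entire $F$ of finite exponential type in $L^2(\R)$ with $\abs{F(0)}>\sum_{n\ge2}\tfrac{\Phi(n)}{n}\abs{F(\log n)}$.

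\emph{Convergent $(*)$ gives a positive limit.} I would put $w(\xi)=\log^+\Phi(e^{\abs\xi})+3\log(e+\abs\xi)$; since $\Phi$ is increasing, the substitution $\xi=\log x$ shows $\int_{\R}\frac{w(\xi)}{1+\xi^2}\,d\xi<\infty$ precisely when $(*)$ converges. Granting that, a Paley--Wiener / Denjoy--Carleman multiplier construction supplies a nonzero $F$ of arbitrarily small exponential type, $F\in L^2(\R)$, with $\abs{F(\xi)}\le e^{-w(\xi)}$ on $\R$, normalized so that $F(0)=1$. Then $\sum_{n\ge2}\tfrac{\Phi(n)}{n}\abs{F(\log n)}\le\sum_{n\ge2}\tfrac{\Phi(n)}{n}e^{-w(\log n)}\le\sum_{n\ge2}\tfrac1{n(e+\log n)^{3}}<1=F(0)$, so a separating $F$ exists and the limit is positive.

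\emph{Divergent $(*)$ gives limit zero.} The heart is a ``Vanishing Lemma'': if $F$ is entire of finite exponential type, $F\in L^2(\R)$, $F\not\equiv0$, and $(*)$ diverges, then $\sum_{n\ge2}\tfrac{\Phi(n)}{n}\abs{F(\log n)}=\infty$. Suppose instead $\sum_{n\ge2}\tfrac{\Phi(n)}{n}\abs{F(\log n)}=:\sum_n s_n<\infty$. Then $\abs{F(\log n)}=\tfrac{ns_n}{\Phi(n)}\to0$, so for large $n$ with $F(\log n)\ne0$ one has $\log^-\abs{F(\log n)}=\log\Phi(n)-\log n+\log\tfrac1{s_n}$ with $\log^- x=\max(0,-\log x)$. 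Grouping the last two terms as $\log\tfrac1{ns_n}$ and using $\log x\le x-1$ together with $\sum s_n<\infty$ shows $\sum_n\tfrac{1}{n\log^2 n}\log\tfrac1{ns_n}$ is bounded below, whence $\sum_n\tfrac{\log^-\abs{F(\log n)}}{n\log^2 n}\ge\sum_n\tfrac{\log\Phi(n)}{n\log^2 n}-O(1)=\infty$. On the other hand $F$ is band limited, so Bernstein's inequality bounds the oscillation of $\abs F$ on $[\log n,\log(n+1)]$ by $O(1/n)$, and feeding this into the integral with the concavity of $\log$ yields $\int_{\R}\tfrac{\log^-\abs F}{1+\xi^2}\,d\xi\gg\sum_n\tfrac1{n\log^2 n}\min\!\p{\log^-\abs{F(\log n)},\log n}-O(1)$, which — after a short argument for the $n$ where $\log^-\abs{F(\log n)}>\log n$ (there $F$ has a zero close to $\log n$), and after peeling off the range where $\Phi$ grows so fast that the classical $\Phi\gg n^{\delta}$ counterexample already applies — is again infinite. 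This contradicts the Paley--Wiener/Cartwright bound $\int_{\R}\tfrac{\log^-\abs F}{1+\xi^2}\,d\xi<\infty$, valid for $F\not\equiv0$ of finite exponential type. Hence $\sum_{n\ge2}\tfrac{\Phi(n)}{n}\abs{\hat g(\log n)}=\infty$ for every $g\in L^2(0,H)\setminus\{0\}$, so $\sup_{\phi\in S}\re\ip{\phi}{g}=\infty$ for all such $g$; since $0\in S$ and $S$ is convex this forces $\overline S=L^2(0,H)$, in particular $-1\in\overline S$ and the limit is $0$. (The Pechersky rearrangement theorem upgrades this to the concrete statement that $-1=\sum_{n\ge2}\varepsilon_n\Phi(n)n^{it-1}$ in $L^2(0,H)$ for suitable $\abs{\varepsilon_n}\le1$, and it is the key tool for the generalized Dirichlet polynomial variants.)

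\emph{Where the difficulty lies.} The crux is the Vanishing Lemma, and within it the passage from the \emph{sparse} samples $\{\abs{F(\log n)}\}$ to a lower bound for $\int_{\R}\tfrac{\log^-\abs F}{1+\xi^2}$: a single zero of $F$ can depress some samples without affecting the integral (a logarithmic singularity is integrable), so one must distinguish ``$\abs F$ genuinely small on an interval'' from ``an isolated dip,'' and one must control the near-cancellation between the two divergent series $\sum\tfrac{\log\Phi(n)}{n\log^2 n}$ and $\sum\tfrac1{n\log n}$. This is exactly the point at which the fine quasianalyticity theorems of Paley and Wiener are indispensable, and where the degenerate cases of very fast or very irregular $\Phi$ have to be separated out and disposed of by cruder, already available arguments.
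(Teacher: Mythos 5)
Your Hahn--Banach reformulation is a genuinely different and appealing framing: you collapse both directions of the theorem into the single question of whether there is a nonzero band\-limited $F$ with $\abs{F(0)}>\sum_{n\ge2}\tfrac{\Phi(n)}{n}\abs{F(\log n)}$, whereas the paper proves a lower-bound theorem (Theorem~2) and a density theorem (Theorem~3) separately and combines them. Your treatment of the divergent case is in substance the paper's Theorem~3: both reduce to showing $\sum_n\tfrac{\Phi(n)}{n}\abs{F(\log n)}=\infty$ for every nonzero band-limited $F$, using Pechersky to get from that to density and Cartwright's $\int\log^-\abs F/(1+x^2)<\infty$ to get the divergence. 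One detail there is not right as stated: Bernstein's inequality controls the oscillation of $\abs F$, not of $\log\abs F$, and $\log\abs F$ is not Lipschitz near a zero of $F$; the paper's Lemma~6 instead uses the Hadamard factorization together with the zero-count $n(r)\ll r$, and discards the $x$ within $\delta$ of a zero while checking that a positive proportion of each dyadic block survives. Your sketch flags the difficulty but does not actually resolve it.

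The convergent case has a concrete gap. You set $w(\xi)=\log^+\Phi(e^{\abs\xi})+3\log(e+\abs\xi)$ and ask for $F$ with $\abs{F(\xi)}\le e^{-w(\xi)}$ on $\R$, ``normalized so that $F(0)=1$.'' These are incompatible: $w(0)\ge 3$, so the bound forces $\abs{F(0)}\le e^{-3}<1$. If you instead mean to replace $F$ by $F/F(0)$, the pointwise bound degrades to $e^{-w(\xi)}/\abs{F(0)}$ and the inequality you need becomes $\sum_{n\ge2}\tfrac{1}{n(e+\log n)^3}<\abs{F(0)}$; but the Paley--Wiener multiplier theorem gives no lower bound on $\abs{F(0)}$ beyond $\ne 0$, so the comparison may fail. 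This is exactly what the paper's Theorem~2 is engineered to avoid: rather than separate $-1$ from $S$ by a single explicit functional, it convolves $1+\sum a_n e^{-\lambda_n it}$ against the test function $f$ (with $\hat f=F$) to produce a Dirichlet series $B(s)=\hat f(0)+\sum a_n \hat f(\lambda_n)e^{-\lambda_n it}$ with absolutely summable coefficients, and then uses compactness of $\{\,\abs{b_n}\le B_n\,\}$ together with the Dirichlet-series Vanishing Lemma (Lemma~1) to get a uniform positive lower bound for $\int_0^{H/2}\abs{B(it)}\,dt$ that requires only $\hat f(0)\ne 0$, with no quantitative dominance of $\hat f(0)$ over the tail. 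Your separation argument can likely be repaired by importing that compactness step, or by a more careful Paley--Wiener construction pinning $F(0)=1$ with $\abs F\le 1$ everywhere and the decaying envelope pushed far enough out that the small-$n$ terms $\Phi(n)/n$ do not spoil the sum (this needs care, since if the envelope is relaxed near $0$ then $\Phi(2)/2$ alone may exceed $1$); as written, however, the step does not go through.
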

Like Ramachandra's original conjecture, this result will have applications on the Riemann zeta-function, although somewhat weaker ones. For example in a forthcoming paper  \cite{Andersson3} we prove that
\begin{gather} \label{rrr}
 \inf_T \int_T^{T+\delta} |\zeta(1+it)| dt=\frac{\pi^2 e^{-\gamma}} {24} \delta^{2}+ \Oh{\delta^4}, \qquad (\delta>0),
\end{gather}
We remark that this gives a positive lower bound that is independent of $T$. This implies that the Riemann zeta-function is not universal on the line $\Re(s)=1$, since if the Riemann zeta-function was universal on $\Re(s)=1$ it should be possibly to  approximate an arbitrarily small function by the Riemann zeta-function on that line. It is clear that Theorem 1 will also give us a lower bound independent of $T$ in \eqref{rrr}, by choosing $\Phi(n)=1$ and approximating the Riemann zeta-function by a truncated Dirichlet polynomial. While the constant will not be explicitly given in $\delta$, and thus not give as sharp result as \eqref{rrr},   Theorem 1 do allow us to prove  corresponding lower bounds, and non universality on curves $\gamma(t)=\omega(t)+it$ whenever $\omega(t)$ tends to $1$ sufficiently fast when $t \to \infty$.

We will also generalize Theorem 1  to more general Dirichlet polynomials. For example the sum over integers $n$  can be replaced with sum over primes $p$ or over shifted integers $n+\alpha$. Also we may allow coefficients that can be quite general. Simple interesting cases includes  divisor functions and Fourier coefficients of Maass wave forms. We will investigate this more carefully later in the paper. For now we just remark that a version of Theorem 1 for the shifted integers $n+\alpha$   implies the following result:
\begin{gather} \label{aj}
  \int_{T}^{T+\delta} \abs{\zeta\p{1+it,\alpha}}dt \geq C_\delta>0,
\end{gather}
from which the fact that the Hurwitz zeta-function is not universal on the line $\Re(s)=1$ follows. The method we use in \cite{Andersson3} for the Riemann zeta-function requires some multiplicative property, such as the function has an Euler product. The method from this paper does not require any such result however, although the proof in this paper will be somewhat more indirect and  not give explicit estimates of $C_\delta$ such as  Eq. \eqref{rrr}. 
For an effective version of the method used in this paper  and some explicit estimates of $C_\delta$ in \eqref{aj},  see  our forthcoming paper \cite{Andersson15}.

\section{Lower bounds for Dirichlet polynomials}

\subsection{A lower bound}
We will first prove a result that implies the lower bound in Theorem 1. We choose to state the theorem for general Dirichlet series.

\begin{thm}
 Suppose   $ 0=\lambda_0<\lambda_1< \cdots$ satisfy the Dirichlet condition. Let $A_0=1$ and $A_n$, $n\geq 2$ be positive real numbers. Define
\begin{gather*}
 \Lambda(x) =\sum_{\lambda_n \leq x} A_n, \\ \intertext{and suppose that}
 \int_1^\infty \frac{\log \Lambda(x)}{x^2} dx < \infty. \\ 
\intertext{Then we have for each $H>0$ that}
 \lim_{N \to \infty} \min_{|a_n| \leq A_n} \int_0^H \abs{1+\sum_{n=1}^N a_n e^{-i \lambda_n t}} dt>0.
\end{gather*}
\end{thm}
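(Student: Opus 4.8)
The plan is to argue by contraposition via a duality/quasianalyticity argument in the spirit of Paley--Wiener. Suppose the conclusion fails, i.e. for some $H>0$ there is a sequence of admissible Dirichlet polynomials $1+\sum_{n=1}^{N_k} a_n^{(k)} e^{-i\lambda_n t}$ whose $L^1(0,H)$ norms tend to $0$. Passing to a subsequence and using that the coefficients are bounded by the fixed weights $A_n$, I would extract weak-* limits of the coefficient sequences and, more importantly, realize each polynomial $f_k(t) = 1+\sum a_n^{(k)}e^{-i\lambda_n t}$ as the restriction to $(0,H)$ of an entire function of exponential type $\lambda_{N_k}$ (a finite exponential sum). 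The goal is to show that a nonzero element of $L^2(0,H)$ — here effectively the constant function $1$, or rather the obstruction to approximating $-1$ — cannot be approximated in $L^1(0,H)$ by such sums unless the spectrum $\{\lambda_n\}$ with multiplicities weighted by $A_n$ is ``rich enough,'' which is precisely measured by $\int_1^\infty \frac{\log\Lambda(x)}{x^2}\,dx$.

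Concretely, I would set up the dual problem: if $1 \notin \overline{\{\sum_{n\ge1} a_n e^{-i\lambda_n t} : |a_n|\le A_n\}}$ in $L^1(0,H)$ fails, then by Hahn--Banach there is no $g \in L^\infty(0,H)$ separating, so I instead work directly. The cleanest route is to associate to a putative approximating sequence a measure or a function whose Fourier/Laplace transform vanishes on $\{\lambda_n\}$ to high order (weighted by $A_n$), and then invoke a Paley--Wiener-type theorem: a function in a suitable Hardy-type class on a half-plane that vanishes on a sequence $\lambda_n$ with prescribed ``density'' $\Lambda$ must vanish identically provided $\int^\infty \log\Lambda(x)/x^2\,dx<\infty$ (this is the quasianalyticity/Carleman-type condition; divergence of the integral is the Denjoy--Carleman borderline). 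From the identical vanishing one deduces that the constant term $1$ would have to be $0$, a contradiction. I would carry this out by: (i) reducing to a statement about the closed linear span, (ii) constructing from a separating functional an analytic function $F$ on $\Re s>0$ with controlled growth whose zeros include each $\lambda_n$, with multiplicity governed by the size of $A_n$ via a Jensen-type count giving zero-counting function $\asymp \Lambda(x)$, and (iii) applying the convergence half of the Paley--Wiener theorem to conclude $F\equiv 0$.

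The main obstacle I anticipate is step (ii): turning the bound $|a_n|\le A_n$ into genuine zero multiplicities (or an equivalent quantitative vanishing) for the transform, and matching the resulting zero-counting function to $\Lambda(x)$ so that the Paley--Wiener integral condition $\int_1^\infty \log\Lambda(x)/x^2\,dx<\infty$ is exactly what blocks the approximation. Heuristically, allowing $|a_n|$ up to $A_n$ is like allowing $\log A_n$ ``extra units of freedom'' at frequency $\lambda_n$, i.e. a zero of order $\sim \log A_n$; summing these against the Paley--Wiener kernel $1/x^2$ reproduces $\int \log\Lambda(x)/x^2\,dx$ after an Abel summation converting $\sum_{\lambda_n\le x}\log A_n$-type sums into the integral of $\log\Lambda$. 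Handling the passage between the $L^1$ norm on a finite interval and the half-plane analytic framework (extension, growth control of the exponential sum, and the fact that the interval length $H$ only affects constants, not the convergence/divergence dichotomy) is the technical heart; the Dirichlet condition on $\{\lambda_n\}$ is what guarantees the exponential sums behave like genuine almost-periodic functions so that the Paley--Wiener machinery applies. Once $F\equiv 0$ is forced, extracting the contradiction with the normalization $A_0=1$ is routine.
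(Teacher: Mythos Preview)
Your proposal has a genuine gap, both in its logical structure and in the step you yourself flag as the main obstacle. The duality is set up backwards: under the contradiction hypothesis (the $L^1$ lower bound is zero), $-1$ lies in the closed convex hull of the admissible polynomials, and then by Hahn--Banach there is \emph{no} separating $g\in L^\infty(0,H)$ --- yet two sentences later you speak of ``constructing from a separating functional an analytic function $F$.'' You cannot have it both ways. The set $\{\sum a_n e^{-i\lambda_n t}:|a_n|\le A_n\}$ is convex but not linear, so the classical completeness--via--zeros duality for exponential systems does not apply; and your heuristic that a bound $|a_n|\le A_n$ should become ``a zero of order $\sim\log A_n$ at $\lambda_n$'' for some transform is not a recognized mechanism and you give no route to making it rigorous. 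Without that translation, the Jensen/zero-counting step has nothing to count.

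The paper's argument is quite different and entirely constructive; it does not use contradiction or zero-counting at all. From the hypothesis $\int_1^\infty \log\Lambda(x)\,x^{-2}\,dx<\infty$ it invokes the Paley--Wiener \emph{majorant} theorem to build explicitly a nonzero continuous $f$ supported on $[0,H/2]$ with $\hat f(0)\ne 0$ and $|\hat f(\xi)|\le \Lambda(|\xi|)^{-2}$. Convolving any admissible $A(it)=1+\sum_{n\ge 1} a_n e^{-i\lambda_n t}$ against $f$ yields $B(it)=\hat f(0)+\sum_{n\ge 1} a_n\hat f(\lambda_n)e^{-i\lambda_n t}$, whose coefficients obey $|b_n|\le A_n/\Lambda(\lambda_n)^2=:B_n$; a one-line dyadic estimate gives $\sum_n B_n\le 2$. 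One is now in the absolutely convergent regime, where a separate Vanishing Lemma (a Dirichlet series absolutely convergent on a line cannot vanish on a subinterval of that line unless it is identically zero) plus compactness of $\{(b_n):|b_n|\le B_n\}$ forces a uniform lower bound $\int_0^{H/2}|B(it)|\,dt\ge\delta>0$. Finally $\int_0^{H/2}|B|\le \|f\|_{L^1}\int_0^H|A|$ transfers this back to $A$. So the role of Paley--Wiener is to \emph{construct} a damping Fourier multiplier, not to analyse the zero set of a dual analytic function; the reduction to absolutely summable coefficients is what replaces your missing ``multiplicity'' device.
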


\subsection{A vanishing result for Dirichlet series}

When we first proved a result like Theorem 1 and Theorem 2 we could not prove the results for all $H>0$, but rather for some sufficiently large $H$. This was presented at the Zeta-Function-Days in Seoul, September 2009. About one month later we managed to prove the result for any $H>0$. The key result is the following vanishing result for Dirichlet series on intervals:

\begin{vanlem} Any Dirichlet series that is identically zero on an interval of absolute convergence is identically zero on the complex plane. 
\end{vanlem}

\begin{proof}
 We may as well consider general Dirichlet series
\begin{gather} L(s)= \sum_{n=0}^\infty a_n e^{-\lambda_n s}, \\ \intertext{where we have the Dirichlet condition} \label{dircon}
0=\lambda_0< \lambda_1 < \lambda_2 \cdots
\end{gather}
 We first remark that the statement is trivial unless the interval lies on the abscissa of convergence, since  a Dirichlet series is analytic to the right of its abscissa of convergence and an analytic function vanishing on an interval is zero everywhere.

In the general case it is somewhat more difficult, but not much so. Suppose that the Dirichlet series $L(s)$  is absolutely convergent on $\Re(s)=\sigma$.
We remark that the Dirichlet series is bounded on the right half plane $\Re(s) \geq \sigma$.
Also it is analytic on $\Re(s)>\sigma.$ Let $\phi(z)$ be a holomorphic bijection mapping $|z|<1$  to $\Re(s)>\sigma$. Then
\begin{gather*}
 f(z)=L(\phi(z))
\end{gather*}
will be a bounded holomorphic function on the unit disc. By a classical theorem for the Hardy space $H^2(T)$ (see for example Rudin \cite[Theorem 17.18]{Rudin}) we have that $f(z)$ is non-vanishing almost everywhere on $|z|=1$. This implies that $L(s)$ cannot be zero on a set of positive measure (in particular not on an interval) on $\Re(s)=\sigma$.
\end{proof}
\begin{rem} An alternative way to prove the Vanishing Lemma is to use the logarithmic integral and a variant of Lemma 5. \end{rem}

What we  use to prove Theorem 2 for any $H>0$ rather than some $H>0$ is the following immediate consequence of the Vanishing Lemma.

\begin{lem} Let $\lambda_n$ fulfill the Dirichlet condition \eqref{dircon}, and let  $B_n$ be a sequence of positive numbers such that 
\begin{gather*}
\sum_{n=1}^\infty B_n<\infty. \\ \intertext{Then for any $H>0$ we have}
\inf_{|b_n| \leq B_n} \int_0^H \abs{1+\sum_{n=1}^\infty b_n e^{-\lambda_n i t}} dt=\delta>0.
\end{gather*}
\end{lem}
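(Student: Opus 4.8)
The plan is a soft compactness argument that reduces the statement to the Vanishing Lemma just established. First I would observe that the admissible set $K=\prod_{n\ge 1}\{z\in\C:\abs{z}\le B_n\}$ is compact in the product topology (Tychonoff), and that the functional $\mathbf{b}\mapsto\int_0^H\abs{1+\sum_{n=1}^\infty b_n e^{-\lambda_n it}}\,dt$ is continuous on $K$. Indeed, $\sum_n B_n<\infty$ forces the series $\sum_n b_n e^{-\lambda_n it}$ to converge uniformly in $t\in[0,H]$ for every $\mathbf{b}\in K$, and if $\mathbf{b}^{(k)}\to\mathbf{b}$ coordinatewise then $\sup_t\abs{\sum_n(b_n^{(k)}-b_n)e^{-\lambda_n it}}\le\sum_{n\le M}\abs{b_n^{(k)}-b_n}+2\sum_{n>M}B_n$, which can be made arbitrarily small by first choosing $M$ large and then $k$ large. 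Hence the infimum in the statement is attained at some $\mathbf{b}^{\ast}\in K$; call its value $\delta\ge 0$. (Equivalently, one can extract a coordinatewise limit of a minimizing sequence by a diagonal argument.)

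Suppose, for contradiction, that $\delta=0$. Then the function $F(t):=1+\sum_{n=1}^\infty b_n^{\ast} e^{-\lambda_n it}$, being a uniformly convergent series of continuous functions, is continuous, and $\int_0^H\abs{F}\,dt=0$ forces $F\equiv 0$ on $[0,H]$. Now consider the generalized Dirichlet series $L(s)=1+\sum_{n=1}^\infty b_n^{\ast} e^{-\lambda_n s}$. Since $\sum_n\abs{b_n^{\ast}}\le\sum_n B_n<\infty$, it converges absolutely on the closed half-plane $\Re(s)\ge0$; in particular the segment $\{it:0\le t\le H\}$ lies in its region of absolute convergence, and $L(it)=F(t)\equiv 0$ there. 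By the Vanishing Lemma, $L\equiv 0$ on $\C$. But $\lambda_n>0$ for $n\ge1$, so $\abs{L(\sigma)-1}\le\sum_n\abs{b_n^{\ast}}e^{-\lambda_n\sigma}\to 0$ as $\sigma\to+\infty$ (dominated convergence with summable majorant $B_n$); hence $L(\sigma)\to 1\neq 0$, a contradiction. Therefore $\delta>0$.

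The argument is almost entirely formal. The only point demanding a little care is the attainment of the minimum, i.e.\ that coordinatewise convergence along a minimizing sequence may be interchanged with the integral over $[0,H]$; this is immediate from the uniform tail bound afforded by $\sum_n B_n<\infty$. The single non-elementary ingredient — upgrading ``vanishing on a subinterval of the line of absolute convergence'' to ``vanishing identically'' — is supplied verbatim by the Vanishing Lemma, so no further analysis is needed.
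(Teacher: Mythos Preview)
Your proof is correct and follows essentially the same approach as the paper's: compactness of the admissible coefficient set forces the infimum to be attained, and if the minimum were zero the Vanishing Lemma would yield a contradiction. Your version simply spells out in more detail what the paper compresses into one sentence (the paper asserts directly that the set of such Dirichlet series is compact in $L^1(0,H)$, while you argue via Tychonoff on the coefficient polydisc together with continuity of the functional, and you make the final contradiction explicit via $L(\sigma)\to 1$).
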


\begin{proof} 
This follows immediately from the fact that the set of Dirichlet series with $|b_n| \leq B_n$ is a compact set in $L^1(0,H)$ and hence the infinum must be attained, or in fact be a minimum. It can not be zero because that would violate the Vanishing Lemma. Hence it must be greater than zero.
\end{proof}

\begin{rem}
 Lemma 1 is not effective, i.e. we do not give an explicit estimate for the lower bound in terms of the $B_n$. This can be done however by the same proof method. We will further investigate this in \cite{Andersson15}.
\end{rem}

\subsection{Paley-Wiener's theorems}
We  also use the following theorems of Paley-Wiener (See  Paley-Wiener \cite{PalWien} or Koosis \cite{Koosis}):
\begin{lem} (Paley-Wiener)
 Suppose $S(x)$ is a positive increasing function such that
 \begin{gather*} \int_0^\infty \frac{\log S(x)dx}{1+x^2}<\infty.\\ \intertext{Then given any $\epsilon>0$ there exists an entire function $\phi(x)$ of finite type $\epsilon$ such that}
\phi(x) \leq \frac 1 {S(|x|)}, \qquad x \in \R. \end{gather*}
\end{lem}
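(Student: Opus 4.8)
I would prove this via the Fourier-analytic side of the Paley--Wiener theory, the one tied to quasianalyticity: realize $\phi$ as the square of the Fourier transform of a non-quasianalytic bump supported in a very short interval, so that the shortness of the support bounds the exponential type while the Denjoy--Carleman derivative estimates for the bump force the decay of $\phi$ along $\R$.

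First I would normalize: writing $\omega(x)=\log S(x)$, the function $\omega$ is increasing and bounded below because $S$ is positive and increasing, and after replacing $\omega$ by $\max(\omega,0)$ -- which only makes the target inequality harder and keeps $\int_0^\infty\omega(x)(1+x^2)^{-1}\,dx<\infty$ -- we may assume $\omega\ge0$, equivalently $\int_1^\infty\omega(x)x^{-2}\,dx<\infty$. The point is that this integral condition is exactly the non-quasianalyticity condition for a suitable weight: by the classical equivalence theorem of Paley--Wiener and Mandelbrojt, which I would quote from \cite{PalWien} or \cite{Koosis}, there is a log-convex sequence $1=M_0\le M_1\le\cdots$ with $M_n/M_{n-1}\to\infty$, with $\sum_{n\ge1}M_{n-1}/M_n$ as small as we please -- say $\le\epsilon/4$ -- and whose associated function $T(r):=\sup_{n\ge0}r^n/M_n$ satisfies $T(r)\ge S(r)^{1/2}$ for all large $r$.

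Next I would invoke the existence (``easy'') direction of the Denjoy--Carleman theorem for this non-quasianalytic sequence to produce a real even $\nu\in C^\infty(\R)$, $\nu\not\equiv0$, supported in $[-\epsilon/4,\epsilon/4]$, with $\|\nu^{(n)}\|_1\le A\,\beta^nM_n$ for every $n$, where $A>0$ is a constant and $\beta=\tfrac4\epsilon\sum_{k\ge1}M_{k-1}/M_k\le1$; the textbook realization is the infinite convolution $\nu=\ast_{k\ge1}\tfrac1{2a_k}\mathbf 1_{[-a_k,a_k]}$ with $a_k$ proportional to $M_{k-1}/M_k$ and $\sum_ka_k=\epsilon/4$, each convolution factor absorbing one derivative at cost $1/a_k$. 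Then I would set $\widehat\nu(z)=\int_{-\epsilon/4}^{\epsilon/4}\nu(t)\cos(zt)\,dt$ and $\phi(z):=\widehat\nu(z)^2$. Since $\widehat\nu$ is entire, real on $\R$, and of exponential type $\le\epsilon/4$ (the elementary Paley--Wiener direction), $\phi$ is entire, nonnegative on $\R$, not identically zero ($\widehat\nu(0)=\int\nu\ne0$), and of exponential type $\le\epsilon/2\le\epsilon$; if type exactly $\epsilon$ is wanted one multiplies by $\bigl(\sin(\epsilon z/4)/(\epsilon z/4)\bigr)^2\le1$, which is entire, nonnegative on $\R$, and $\le1$. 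For the decay, integrating by parts $n$ times gives $|\widehat\nu(x)|\le|x|^{-n}\|\nu^{(n)}\|_1\le A\beta^nM_n|x|^{-n}$ for real $x\ne0$, so $|\widehat\nu(x)|\le A/T(|x|/\beta)$ by the definition of $T$; since $\beta\le1$ and $T$ is increasing, $T(|x|/\beta)\ge T(|x|)\ge S(|x|)^{1/2}$ for $|x|$ large, whence $\phi(x)\le A^2/S(|x|)$ there, while on the remaining compact set $\phi$ is bounded and $1/S$ is bounded below, so $\phi(x)\le C/S(|x|)$ on all of $\R$ for some $C\ge1$; replacing $\phi$ by $\phi/C$ completes the proof.

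The only step that is not routine bookkeeping is the second one: translating $\int_0^\infty\log S(x)(1+x^2)^{-1}\,dx<\infty$ into a non-quasianalytic weight sequence whose associated function dominates $S^{1/2}$ \emph{and} has $\sum M_{n-1}/M_n$ small enough that the scale distortion coming from differentiating a compactly supported convolution is harmless -- the scale genuinely matters here because $S$ is only assumed monotone, so $S(r/\beta)$ and $S(r)$ differ. This is precisely the equivalence theorem underlying the lemma (Paley--Wiener, Mandelbrojt; see \cite{PalWien,Koosis}), so in a self-contained account one either reproduces its proof or, as here, cites it. An essentially equivalent alternative would build $\phi$ directly as a canonical product $\prod_k(1-z^2/a_k^2)^2$ over a suitably spaced real zero set of density $\asymp\epsilon$, estimating $\log|\phi(x)|$ via the logarithmic-potential formula; but choosing those zeros conceals the same classical input.
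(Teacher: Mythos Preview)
The paper does not prove this lemma at all: it is stated as a classical result and attributed to Paley--Wiener \cite{PalWien} and Koosis \cite{Koosis} without further argument. So there is no ``paper's own proof'' to compare against; the author simply quotes the theorem.

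Your sketch is a correct outline of one of the standard proofs, namely the Denjoy--Carleman route: translate the logarithmic-integral condition into a non-quasianalytic weight sequence $(M_n)$, build a compactly supported bump $\nu$ by infinite convolution of short indicator functions, and take $\phi=\widehat\nu^{\,2}$. The bookkeeping you give (type $\le\epsilon$ from the support length, decay from iterated integration by parts and the associated function $T(r)=\sup_n r^n/M_n$, positivity on $\R$ from squaring a real even transform, and the final rescaling by a constant) is all sound. You correctly isolate the only genuinely nontrivial step --- producing $(M_n)$ with \emph{both} $T(r)\ge S(r)^{1/2}$ for large $r$ \emph{and} $\sum_{n\ge1}M_{n-1}/M_n\le\epsilon/4$ --- and handle it by citing the classical equivalence theorem, which is exactly what the paper itself does for the whole lemma. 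Your closing remark that one could alternatively build $\phi$ directly as a canonical product (or, equivalently, via an outer function in the upper half-plane) points to the other standard proof one finds in Koosis; either approach is acceptable here.

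One small cosmetic point: the lemma as stated writes $\phi(x)\le 1/S(|x|)$ rather than $|\phi(x)|\le 1/S(|x|)$, which only carries content if $\phi\ge0$ on $\R$. Your construction gives $\phi=\widehat\nu^{\,2}\ge0$, so this is fine, and indeed the downstream application (Lemma~4) uses the absolute-value bound.
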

\begin{lem} (Paley-Wiener)
Suppose $\phi(x)$ is an entire function of exponential type $A$ such that
\begin{gather*}
 \int_{-\infty}^\infty \abs{\phi(x)}^2 dx <\infty.
\end{gather*}
 Then the Fourier-transform $\hat \phi$ will have support on $[-A,A].$
\end{lem}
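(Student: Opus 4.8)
The plan is to recognize this as the classical $L^2$ Paley-Wiener theorem and to prove it by a contour shift combined with Plancherel's identity. Normalize $\hat\phi(\xi)=\int_{\mathbb R}\phi(x)e^{-ix\xi}\,dx$, understood in the $L^2$-sense (Plancherel); the goal is to show $\hat\phi(\xi)=0$ for almost every $\xi$ with $\abs\xi>A$.

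The one genuinely analytic ingredient is the Plancherel-P\'olya growth bound for the horizontal $L^2$-means of $\phi$:
\[
 g(y):=\int_{\mathbb R}\abs{\phi(x+iy)}^2\,dx\ \le\ e^{2A\abs y}\,g(0)\ <\ \infty\qquad(y\in\mathbb R).
\]
I would obtain this by a Phragm\'en-Lindel\"of argument: $\log g$ is convex on any bounded horizontal strip (log-convexity of the $L^2$-means of the holomorphic function $\phi$ over horizontal lines), while the exponential-type hypothesis $\abs{\phi(z)}\le C_\epsilon e^{(A+\epsilon)\abs z}$ together with square-integrability on $\mathbb R$ --- through the bound $h_\phi(\theta)\le A\abs{\sin\theta}$ on the Phragm\'en-Lindel\"of indicator, valid once $\phi$ is known bounded on the real axis --- forces the one-sided slopes of this convex function at $\pm\infty$ to be at most $2A$; letting $\epsilon\to0$ gives the stated inequality. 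This is the step recorded in the cited works of Paley-Wiener and Koosis.

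Granting it, fix $y\in\mathbb R$ and shift the contour of the defining integral from $\mathbb R$ onto $\mathbb R+iy$: apply Cauchy's theorem to the rectangle bounded by the two lines and the vertical segments at $\pm R$, and let $R\to\infty$, the vertical contributions vanishing along a suitable sequence $R_n\to\infty$ since $g$ is finite on the segment between $0$ and $y$. This yields $\widehat{\phi(\cdot+iy)}(\xi)=e^{-y\xi}\hat\phi(\xi)$, and Plancherel on the line $\mathbb R+iy$ turns $g(y)$ into
\[
 g(y)=\frac1{2\pi}\int_{\mathbb R}\abs{\hat\phi(\xi)}^2 e^{-2y\xi}\,d\xi .
\]
Combining the two displays, $\int_{\mathbb R}\abs{\hat\phi(\xi)}^2 e^{-2y\xi}\,d\xi\le e^{2A\abs y}\int_{\mathbb R}\abs{\hat\phi(\xi)}^2\,d\xi$ for every $y$. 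If $\int_{\xi>A+\eta}\abs{\hat\phi(\xi)}^2\,d\xi>0$ for some $\eta>0$, then as $y\to-\infty$ the left side is at least $e^{2\abs y(A+\eta)}\int_{\xi>A+\eta}\abs{\hat\phi(\xi)}^2\,d\xi$, which outgrows the right side --- a contradiction; by symmetry $\hat\phi$ carries no $L^2$-mass on $\xi<-(A+\eta)$ either. Letting $\eta\to0$ shows that $\hat\phi$ is supported in $[-A,A]$.

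I expect the main obstacle to be the Plancherel-P\'olya bound of the second paragraph: it has to be obtained from the exponential-type estimate and square-integrability on $\mathbb R$ alone, with no circular appeal to the conclusion, and it rests in turn on the small auxiliary fact that an entire function of exponential type lying in $L^2(\mathbb R)$ is bounded on $\mathbb R$. Everything after that --- the contour shift and the manipulations with Plancherel --- is routine bookkeeping; and in any case one may simply quote the lemma from Paley-Wiener \cite{PalWien} or Koosis \cite{Koosis}, as the paper does.
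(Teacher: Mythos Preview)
The paper does not supply its own proof of this lemma; it merely attributes the result to Paley--Wiener \cite{PalWien} and Koosis \cite{Koosis} and moves on. Your sketch is the standard argument (Plancherel--P\'olya growth of horizontal $L^2$-means, contour shift, Plancherel, and a comparison of exponential growth rates) and is correct in outline; you also explicitly note that a citation suffices, which is exactly what the paper does.

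Two small remarks on the sketch itself. First, since $\hat\phi$ is defined in the $L^2$-sense, the contour-shift identity $\widehat{\phi(\cdot+iy)}(\xi)=e^{-y\xi}\hat\phi(\xi)$ strictly speaking requires either an approximation (truncate or mollify so that the relevant integrals are absolutely convergent, then pass to the limit in $L^2$) or an appeal to the fact that an entire function of exponential type in $L^2(\mathbb R)$ is bounded on $\mathbb R$ and decays suitably, which you allude to but do not carry out. Second, the vanishing of the vertical contributions ``along a suitable sequence $R_n\to\infty$'' follows from Fubini applied to $\int_0^y g(t)\,dt<\infty$, which gives $\int_0^y\abs{\phi(R+it)}^2\,dt\to0$ along such a sequence; this is fine, but worth making explicit. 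Neither point is a gap in the mathematics, only in the exposition, and in any case the paper's own treatment is simply to cite.
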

A  direct consequence of Lemma 2 and Lemma 3 is the following:
\begin{lem}
 Let $\epsilon>0$ and  suppose  $S(x)$ is a positive increasing function such that
 \begin{gather*} \int_0^\infty \frac{\log S(x)dx}{1+x^2}<\infty.\\
\intertext{Then there exists a continuous function $f$ with support on $[0,\epsilon]$ such that $\hat f(0) \neq 0$, and  such that}
 |\hat f(t)| \leq \frac 1 {S(|t|)}, \qquad t \in \R.
\end{gather*}
\end{lem}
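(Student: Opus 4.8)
The plan is to combine the two Paley--Wiener results directly, with a small renormalization at the end. Starting from a positive increasing $S(x)$ with $\int_0^\infty \frac{\log S(x)}{1+x^2}\,dx<\infty$, I would first apply Lemma 2 (but with $\epsilon$ replaced by $\epsilon/2$) to obtain an entire function $\psi(x)$ of exponential type at most $\epsilon/2$ satisfying $0\le\psi(x)\le 1/S(\abs{x})$ for all real $x$. Since $S$ is increasing and positive, $1/S(\abs{x})$ is bounded, and in fact $\int_{-\infty}^\infty \psi(x)^2\,dx \le \int_{-\infty}^\infty S(\abs{x})^{-2}\,dx<\infty$ because $\log S(x)/(1+x^2)$ integrable forces $S(x)\to\infty$ fast enough that $S(\abs{x})^{-2}$ is integrable (indeed $\log S(x)=o(x)$, so for large $x$ one gets $S(x)^{-2}$ decaying; one checks integrability near infinity using that $S$ is increasing). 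Thus $\psi\in L^2(\R)$.

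Now $\psi$ is entire of exponential type $\le\epsilon/2$ and in $L^2(\R)$, so by Lemma 3 its Fourier transform $\hat\psi$ is supported in $[-\epsilon/2,\epsilon/2]$. Set $g=\hat\psi$; then $g$ is supported in $[-\epsilon/2,\epsilon/2]$, and by Fourier inversion $\psi=\hat g$ (up to the normalization constant in the definition of the transform, which is harmless). We would like $g$ to be continuous with $g(0)\neq 0$. Continuity of $g=\hat\psi$ follows from $\psi\in L^1(\R)$: since $0\le\psi(x)\le S(\abs{x})^{-2\cdot 0}$... more precisely $\psi(x)\le 1/S(\abs x)$ and $S(\abs x)^{-1}$ is integrable (same estimate as above, since $\log S(x)/(1+x^2)$ integrable gives $S(x)^{-1}$ integrable at infinity, $S$ being increasing), so $\psi\in L^1$ and hence $\hat\psi$ is continuous. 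For $g(0)\ne 0$: if $\psi$ vanished identically we would contradict $\psi$ being a nonzero function of positive type (Lemma 2 produces a genuine function), but $g(0)=c\int\psi(x)\,dx>0$ automatically since $\psi\ge 0$ and $\psi\not\equiv 0$. So take $f(t)=g(t+\epsilon/2)$, a continuous function supported on $[0,\epsilon]$; then $\hat f(t)=e^{i\epsilon t/2}\hat g(t)=e^{i\epsilon t/2}\psi(-t)$ up to constants, whence $\abs{\hat f(t)}=\abs{\psi(t)}\le 1/S(\abs t)$, and $\hat f(0)=\psi(0)\ne 0$.

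I expect the only genuinely delicate point to be verifying the integrability statements $S(\abs x)^{-1}, S(\abs x)^{-2}\in L^1(\R)$, i.e. that the hypothesis $\int_0^\infty \frac{\log S(x)}{1+x^2}\,dx<\infty$ together with monotonicity of $S$ is strong enough to force $\psi\in L^1\cap L^2$; this is a standard but slightly fiddly comparison argument (the logarithmic integrability condition is exactly the quasianalyticity-type condition that guarantees $S$ grows faster than any polynomial on a set of full density, and monotonicity upgrades this to everywhere). Everything else is bookkeeping with the Fourier transform — the translation by $\epsilon/2$ to move the support from $[-\epsilon/2,\epsilon/2]$ into $[0,\epsilon]$, and tracking the normalization constant in the definition of $\hat{\phantom{f}}$ so that the modulus bound and the nonvanishing at $0$ come out correctly.
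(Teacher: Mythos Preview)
Your overall strategy---apply Lemma~2 with type $\epsilon/2$, use Lemma~3 to get compactly supported Fourier transform, then translate by $\epsilon/2$---is exactly the paper's route. The problem is in the step you yourself flag as delicate: your claim that the hypothesis $\int_0^\infty \frac{\log S(x)}{1+x^2}\,dx<\infty$ forces $S(|x|)^{-1}$ or $S(|x|)^{-2}$ to lie in $L^1(\R)$ is simply false. The logarithmic integral condition is an \emph{upper} bound on the growth of $S$, not a lower bound; in particular any constant $S(x)\equiv c>1$ satisfies it, and then $1/S$ is certainly not integrable. So your justification that $\psi\in L^1\cap L^2$ (needed for Lemma~3 and for continuity of $\hat\psi$) breaks down as written. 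The fix is painless: replace $S$ by $\tilde S(x)=\max\bigl(S(x),1+x^2\bigr)$, which is still increasing, still has finite logarithmic integral (since $\log(1+x^2)/(1+x^2)$ is integrable), and now $\tilde S^{-1}\in L^1\cap L^2$; apply Lemma~2 to $\tilde S$ instead.

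A smaller issue: your nonvanishing argument is muddled. You argue that $g(0)=c\int\psi>0$ using positivity of $\psi$, but what the lemma demands is $\hat f(0)\ne 0$, which after your shift is $\hat g(0)=\psi(0)$, not $g(0)$. Also, Lemma~2 as stated does not assert $\phi\ge 0$. The paper sidesteps all of this: if the entire function $g$ produced by Lemma~2 happens to vanish at $0$ to order $n$, replace it by $c_0\,g(z)/z^n$ with $c_0>0$ small enough to preserve the bound $|g(t)|\le 1/S(|t|)$ on $|t|\le 1$. This is cleaner and does not rely on any sign information.
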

\begin{proof}
  By Lemma 2 we can find an entire function $g$ of type $\epsilon/2$, such that  $|g(t)| \leq 1/S(|t|)$. We may assume that $g(0) \neq 0$, since otherwise we can consider the function
$$
  \tilde g(z)= c_0 \frac{g(z)}{z^n},
$$
where $n$ is the order of the zero of $g(z)$ at $z=0$, and $c_0>0$ is chosen small enough for   $|\tilde g(t)| \leq 1/S(|t|)$ to be valid for $-1 \leq t \leq 1$. By Lemma 3, the Fourier transform $\hat g(t)$  has support on $[-\epsilon/2,\epsilon/2]$. Thus   $f(t)= \hat g(t+\epsilon/2)$ has support on $[0,\epsilon]$ and  $\hat f(t)$ also fulfill the required inequality.
 \end{proof}

\subsection{Proof of Theorem 2}
Our proof will follow from our nonvanishing result for Dirichlet series, Lemma 1 (which in turn is a special case of Theorem 2), and the Paley-Wiener theorems in the form of Lemma 4:

From Lemma 1 we  find a  test  function $f(x)$ that is not the zero-function, with support on $[0,H/2]$ such that
\begin{gather*} |\hat f(x)| \leq \frac 1 {\Lambda(x)^2}, \qquad \text{and} \qquad \hat f(0) \neq 0.\end{gather*}
  Now consider the Dirichlet series
\begin{gather}
 B(s)= \hat f(0)+\sum_{n=1}^N a_n e^{-\lambda_n it} \hat f(\lambda_n) =
  b_0+ \sum_{n=1}^\infty b_n  e^{-\lambda_n it}.
\end{gather}
It is clear that $B(s)$ can be given by the convolution
\begin{gather} \label{conv}
  B(s)=\int_{0}^{H/2} A(s+ix) f(x) dx, \qquad \text{where}  \qquad  A(s)=1+\sum_{n=1}^N a_n e^{-\lambda_n s}.
\end{gather}
It follows that if
\begin{gather*}
 B_n =  \frac {A_n} {\Lambda(\lambda_n)^2} =\frac {A_n} {\p{\sum_{k=1}^n A_k}^2} ,  \\ \intertext{then}
 |b_n| \leq B_n.
\end{gather*}
By dyadic division we have
\begin{align*} 
\sum_{n=1}^\infty \frac{A_n}{\Lambda(\lambda_n)^2}&= \sum_{m=0}^\infty \sum_{2^m \leq  \Lambda(\lambda_n)<2^{m+1}}   \frac{A_n}
{\Lambda(\lambda_n)^2}, \\ &\leq  \sum_{m=0}^\infty \sum_{2^m \leq  \Lambda(\lambda_n)<2^{m+1}}  \frac{A_n}{(2^{m})^2}, \\ &\leq \sum_{m=0}^\infty \frac{2^{m}}{(2^m)^2} =2.
 \end{align*}
Thus we have 
\begin{gather}
  \sum_{n=1}^\infty B_n \leq 2<\infty,
\end{gather}
and we can apply Lemma 1 on the Dirichlet series $B(s)$ and the interval $[0,H/2]$. We have that
\begin{gather} \label{ab1}
  0 < \delta b_0 \leq \int_0^{H/2} \abs{B(it)} dt.
\\ \intertext{By \eqref{conv} we see that }
   \int_0^{H/2} \abs{B(it)} dt = \int_0^{H/2} \abs{\int_{0}^{H/2} A(it+ix) f(x) dx} dt. \label{ab2}
   \end{gather}
 By  \eqref{ab1},\eqref{ab2} and the triangle inequality we obtain 
\begin{gather*}
  0<\delta b_0 \leq  \int_0^{H} \abs{ A(it)} dt \int_0^{H/2} \abs{f(x)} dx.
 \end{gather*}
whenever $a_n \leq |A_n|$. Since $f(x)$ is not the zero-function and has support on $[0,H/2]$ we can divide the inequality with the right most integral and we get that
$$ 0<\frac{\delta b_0} { \int_0^{H/2} \abs{f(x)} dx} \leq   \int_0^{H} \abs{ A(it)} dt,
$$
 for any Dirichlet polynomial $A(s)$ such that $\abs{a_n} \leq A_n$.\qed

\section{Approximation by Dirichlet polynomials}

\subsection{Approximation theory for Fourier polynomials and Dirichlet polynomials}
\subsubsection{Classical theory}
In our first disproof of Ramachandra's conjecture \cite{Andersson} we used the fact that the Dirichlet polynomials $\sum_{n=2}^N a_n n^{it}$ can approximate any function in $L^2(0,H)$, and then we used the fact that $-1$ belongs to that class, in order to obtain the fact that
\begin{gather*}
 \int_0^H \abs{1+\sum_{n=2}^N a_n n^{it}}^2 dt < \epsilon,
\end{gather*}
for any $\epsilon>0$ and some Dirichlet polynomial $\sum_{n=2}^N a_n n^{it}$ depending on $\epsilon$.  The difference here compared to classical results from approximation theory is that we now have some estimates on the growth of the coefficients $a_n$.

 The theory of approximation by Dirichlet polynomials (and Fourier polynomials) has been extensively studied. Examples includes classical trigonometric series (Fourier theory) and  the Szasz-M\"untz theorem. A deep theorem that gives a quite satisfactory answer to the question of when a function on an interval can be approximated by complex exponentials is the Beurling-Malliavin theorem (see for example Koosis \cite{Koosis2}). For good surveys of this approximation theory, see the book of Levinson \cite{Levinson} and the  paper of Redheffer \cite{Redheffer}.
\subsubsection{Approximation with bounded coefficients}
However, when it comes to the corresponding approximation theory of Dirichlet (or Trigonometric) polynomials with bounded coefficients,  less has been done, and one of our aims in this paper is to make some advances in this theory. One simple example that shows that this approximation theory can be somewhat more difficult is the following example:
\begin{example}
  It is sufficient to show that $-1$ can be approximated by the Dirichlet polynomials $\sum_{n=2}^N b_n e^{-\lambda_n it}$ in $L^2(0,H)$ in order to prove that the Dirichlet polynomials are dense in $L^2(0,H)$.
\end{example}
\begin{proof}
 It follows by integrating both the constant $-1$ and the  Dirichlet polynomial $k$  times, that the polynomial $x^k$  can be approximated by  Dirichlet polynomials of the desired type. By Weierstrass theorem the polynomials $x^k$ are dense in $L^2(0,H)$ and it follows that  the Dirichlet polynomials are dense in $L^2(0,H)$.
\end{proof}
This proof does not work when we have conditions $|b_n| \leq B_n$. Instead  we need other methods.

\subsection{New approximation theorems}
Our main results about approximation by Dirichlet polynomials will be the following theorem:
 \begin{thm}
 Let  $A_n$ and $\Lambda(n)$ be defined as in Theorem 2. Suppose 
\begin{gather} \notag
 \int_1^\infty \frac {\varepsilon(x)}{{x}}dx<\infty, 
\\ \intertext{for some positive decreasing function $\varepsilon(x)$ and that}
 \label{condition}
   \Lambda(X) \ll \Lambda(X+Y)-\Lambda(X), \qquad (\varepsilon(X) \ll Y \ll 1)
\end{gather}
 for some $\delta>1$.  Let $H>0$. 
 Then the set of Dirichlet polynomials \begin{gather}\left  \{\sum_{n=2}^N a_n e^{-\lambda_n it}, |a_n| \leq A_n \right \} \notag \\  \intertext{is dense in $L^2(0,H)$ if and only if} 
 \int_1^\infty \frac {\log \Lambda(x)}{{x^2}}dx=\infty. \label{iiii}
\end{gather}
Furthermore the  conclusion holds true if we have the additional assumption that
\begin{gather*}
  \sum_{n=1}^\infty A_n^2 <\infty
\end{gather*}
is convergent and  $|a_n| \leq A_n$ is replaced by $|a_n| = A_n$. Also, under this assumption we may replace the set of Dirichlet polynomials with the set of convergent Dirichlet series in $L^2(0,H)$ such that  $|a_n| = A_n$.
\end{thm}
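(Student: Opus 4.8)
My plan is to reduce the biconditional to a single statement about functions orthogonal to the family. For $\phi\in L^2(0,H)$ put $\Psi(z)=\int_0^H\phi(t)e^{izt}\,dt$; by Paley--Wiener $\Psi$ is entire of exponential type $\le H$ with $\Psi|_\R\in L^2(\R)$, and $\Psi\equiv0$ iff $\phi=0$. Since $\abs{\ip{A_n e^{-\lambda_n it}}{\phi}}=A_n\abs{\Psi(\lambda_n)}$, the set $C=\{\sum_{n\ge2}a_n e^{-\lambda_n it}:\abs{a_n}\le A_n\}$ is a circled convex subset of $L^2(0,H)$ containing $0$, so by Hahn--Banach its closure is all of $L^2(0,H)$ precisely when $\sup_{c\in C}\abs{\ip{c}{\phi}}=\infty$ for every $\phi\ne0$; and $\sup_{c\in C}\abs{\ip{c}{\phi}}=\sum_{n\ge2}A_n\abs{\Psi(\lambda_n)}$. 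Thus density is equivalent to: every nonzero $\Psi$ in the Paley--Wiener class $PW_H$ satisfies $\sum_n A_n\abs{\Psi(\lambda_n)}=\infty$. The ``only if'' is then immediate: if $\int_1^\infty\frac{\log\Lambda(x)}{x^2}\,dx<\infty$, Theorem~2 gives a positive lower bound for $\min_{\abs{a_n}\le A_n}\int_0^H\abs{1+\sum_n a_n e^{-\lambda_n it}}\,dt$ uniformly in $N$, so $-1\in L^2(0,H)$ is not in the $L^1(0,H)$-closure of $C$, a fortiori not in its $L^2(0,H)$-closure, and $C$ is not dense.

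For the substantial direction, assume $\int_1^\infty\frac{\log\Lambda(x)}{x^2}\,dx=\infty$ and, for contradiction, that some nonzero $\Psi\in PW_H$ has $\sum_n A_n\abs{\Psi(\lambda_n)}=:S<\infty$. Since $\Psi$ is of exponential type $\le H$ and bounded on $\R$, Bernstein's inequality gives $\norm{\Psi'}_{L^\infty(\R)}\le H\norm{\Psi}_{L^\infty(\R)}=:HM$. For large $x$ pick $J_x=(x,x+Y]$ with $\varepsilon(x)\ll Y\ll1$; the hypothesis $\Lambda(X)\ll\Lambda(X+Y)-\Lambda(X)$ gives $\sum_{\lambda_n\in J_x}A_n\gg\Lambda(x)$, so some $\lambda_n\in J_x$ has $\abs{\Psi(\lambda_n)}\le\beta(x)/(c_0\Lambda(x))$ with $\beta(x):=\sum_{\lambda_n\in J_x}A_n\abs{\Psi(\lambda_n)}$, and with Bernstein this yields the pointwise bound $\abs{\Psi(x)}\ll\frac{\beta(x)}{\Lambda(x)}+\varepsilon(x)$. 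Since $\int_1^\infty\varepsilon(x)/x\,dx<\infty$, each $\lambda_n$ lies in $J_x$ only for $x$ in a set of length $\asymp\varepsilon(\lambda_n)$, whence $\int_1^\infty\frac{\beta(x)}{1+x^2}\,dx\ll S<\infty$; in particular $\int_1^\infty\frac{\log^+\beta(x)}{1+x^2}\,dx<\infty$ and $\int_{\{\beta(x)\ge\Lambda(x)\varepsilon(x)\}}\frac{\Lambda(x)\varepsilon(x)}{1+x^2}\,dx<\infty$. Splitting $[1,\infty)$ into the region where $\beta(x)<\Lambda(x)\varepsilon(x)$ (there $\abs{\Psi(x)}\ll\varepsilon(x)$, so $\log\frac1{\abs{\Psi(x)}}\gg\log\frac1{\varepsilon(x)}$) and its complement, and using $\int_1^\infty\frac{\log\Lambda(x)}{1+x^2}\,dx=\infty$, one concludes $\int_\R\frac{\log(1/\abs{\Psi(x)})}{1+x^2}\,dx=\infty$. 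This contradicts the Paley--Wiener theorem on the logarithmic integral, by which a nonzero $\Psi\in PW_H$ has $\int_\R\frac{\abs{\log\abs{\Psi(x)}}}{1+x^2}\,dx<\infty$. Hence $\Psi\equiv0$, $\phi=0$, and $C$ is dense.

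For the two ``furthermore'' statements, the extra hypothesis $\sum_n A_n^2<\infty$ makes $x_n:=A_n e^{-\lambda_n it}\in L^2(0,H)$ satisfy $\sum_n\norm{x_n}^2=H\sum_n A_n^2<\infty$, so $\mathbb{E}\norm{\sum_n\varepsilon_n x_n}^2=\sum_n\norm{x_n}^2<\infty$ and $\sum_n\varepsilon_n x_n$ converges in $L^2(0,H)$ for almost every unimodular sequence $(\varepsilon_n)$. Since by the main direction $\sum_n\abs{\ip{x_n}{\phi}}=\sum_n A_n\abs{\Psi(\lambda_n)}=\infty$ for every $\phi\ne0$, Pechersky's rearrangement theorem shows that the set of all such sums is dense in $L^2(0,H)$; this gives the asserted density of the convergent Dirichlet series with $\abs{a_n}=A_n$, and, by truncating, of the corresponding Dirichlet polynomials.

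The step I expect to be the main obstacle is the discrete--to--continuous passage in the main direction: converting the convergence of $\sum_n A_n\abs{\Psi(\lambda_n)}$ into a logarithmic-integral lower bound for $\Psi$ that beats the quasianalyticity threshold. This is precisely where the two regularity hypotheses are consumed --- $\int_1^\infty\varepsilon(x)/x\,dx<\infty$ to control the local tails $\beta(x)$, and $\Lambda(X)\ll\Lambda(X+Y)-\Lambda(X)$ to transfer smallness of $\Psi$ at the $\lambda_n$ to a genuine pointwise bound via Bernstein --- and the splitting of the line into the regions where $\beta(x)$ is small (so $\log\Lambda(x)$ dominates) and where it is large will need care to ensure the latter is negligible against $dx/(1+x^2)$.
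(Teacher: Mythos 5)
Your Hahn--Banach reduction is a clean and correct alternative to the paper's use of Pechersky's theorem for the $|a_n|\le A_n$ density statement: the set $C$ is indeed circled and convex, so density of its closure is equivalent to $\sup_{c\in C}|\langle c,\phi\rangle|=\sum_n A_n|\Psi(\lambda_n)|=\infty$ for every nonzero $\phi$. The ``only if'' direction via Theorem~2 matches the paper, and your use of Pechersky for the two ``furthermore'' clauses is the same as the paper's.

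The gap is in the ``if'' direction, and it is exactly the discrete-to-continuous step you flagged. Bernstein's inequality gives only \emph{additive} control of $|\Psi|$: $|\Psi(x)-\Psi(\lambda_n)|\le HM\,\varepsilon(x)$, hence $|\Psi(x)|\ll\beta(x)/\Lambda(x)+\varepsilon(x)$. In the region $E_1=\{\beta(x)<\Lambda(x)\varepsilon(x)\}$ this degenerates to $|\Psi(x)|\ll\varepsilon(x)$, and the resulting lower bound $\log(1/|\Psi(x)|)\gg\log(1/\varepsilon(x))$ is far too weak: for any admissible $\varepsilon$ (e.g.\ $\varepsilon(x)=\log^{-1-\delta}(x)$, giving $\log(1/\varepsilon(x))\asymp\log\log x$), one has $\int_1^\infty\frac{\log(1/\varepsilon(x))}{1+x^2}\,dx<\infty$, whereas $\log\Lambda(x)$ can be far larger on $E_1$. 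Your splitting therefore does not force $\int\frac{\log(1/|\Psi(x)|)}{1+x^2}\,dx=\infty$ --- the $E_1$-contribution is bounded and there is no way to show that $E_2$ alone carries the divergence of $\int\frac{\log\Lambda(x)}{1+x^2}\,dx$. What is needed is an additive bound on $\log|\Psi|$, i.e.\ multiplicative control of $|\Psi|$: the paper's Lemma~6 gets this from the Hadamard factorization of $\hat f$, showing $\min_{t\in[x,x+\varepsilon(x)]}\log|\hat f(t)|=\log|\hat f(x)|+O(\delta^{-1}\varepsilon(x)x)$ provided $x$ is at distance $>\delta$ from every zero of $\hat f$; then the smallness of $|\hat f(\lambda_n)|$ (of size $\ll\beta(x)/\Lambda(x)$) really propagates to $|\hat f(x)|$. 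This requires avoiding the zeros, and the paper completes the argument with a zero-counting estimate $n(r)\le eHr+O(1)$ and a choice of $\delta=1/(8eH)$ to ensure the ``good'' set $\{x:\,|x-z_k|>\delta\text{ for all }k\}$ retains at least half the measure of each dyadic block, so that $\int_{\text{good}}\frac{\log\Lambda(x)}{1+x^2}\,dx$ still diverges. Replacing Bernstein by this Hadamard-product/zero-counting mechanism is essential; as written, your contradiction with the Paley--Wiener log-integral theorem does not materialize.
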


\begin{rem} We can for example choose $\varepsilon(x)=x^{-\delta}$ or $\varepsilon(x)=\log(x+1)^{-1-\delta}$ for some $\delta>0$ in Theorem 3, and these examples indeed seems sufficient for the applications we consider in this paper. In general it  is an interesting problem to try to replace \eqref{condition} with as weak condition as possible. Can we find some $\varepsilon(x)$ such that the integral is divergent but we still have Theorem 3?
 \end{rem}

\subsection{The Pechersky rearrangement theorem}

The fact that we  used universality to disprove Ramachandra's original conjecture, suggests that methods from universality should be used. We will here state a variant of the  Pechersky Rearrangement theorem (see Pechersky \cite{Pechersky}, Steuding \cite[Theorem 5.4]{Steuding},  Voronin \cite{Voronin} or  Bagchi \cite{Bagchi}) which is an important tool used to prove the Voronin universality theorem.

\begin{pecher} Let $\{x_n: n \geq 1 \}$ be a sequence in a complex Hilbert space $X$ satisfying: \begin{gather} \notag
    \sum_{n=1}^\infty \abs{\ip{x_n} {x}}=\infty, \qquad  \text{ for } \qquad x \in X, \, \, \, x \neq 0.
\\ \intertext{Then the set}  \notag \left \{\sum_{n=1}^m a_n x_n:  |a_n| \leq 1 \right \} \\ \intertext{is dense in $X$. If furthermore}
   \sum_{n=1}^\infty  \norm{x_n}^2< \infty. \label{sqr} \\ \intertext{Then the set}
  \left \{\sum_{n=1}^m a_n x_n:  |a_n| = 1 \right \},  \notag \\ \intertext{as well as the set of convergent series}
    \left \{\sum_{n=1}^\infty a_n x_n:  |a_n| = 1 \right \}, \notag
\end{gather}
are  dense in $X$.
\end{pecher}

\begin{proof}
  The last part of the result is exactly the Pechersky rearrangement theorem as given in Steuding \cite[Theorem 5.4]{Steuding}. 

The second part of the result is simpler to prove since the construction of a convergent element in the set \cite[p. 90]{Steuding} is not needed.

 The first part of the result is even easier to prove and follows from the same proof as the general case, see Steuding \cite[pp. 90-93]{Steuding}. It can be simplified considerably since the only time in the proof where the argument \eqref{sqr} is used, is when it is proved that we can choose $|a_n|=1$ instead of $|a_n| \leq 1$. Therefore, the arguments on p. 92-93 that use Lemma 5.2  in Steuding \cite{Steuding}  are not needed.
\end{proof}

\subsection{The Hilbert space of $L^2$-functions on an interval}

The difference in applying this theorem is that we use a different Hilbert-space than usual in universality. We use the simple Hilbert space $L^2(0,H)$ 
$$
 \ip f g = \int_0^H f(t) \overline{g(t)}dt,
$$
where the integral here is over an interval. In usual universality, it is over a two-dimensional set in the complex plane. This means that we use different theorems about entire functions, such as the Paley-Wiener's theorems instead of Bernstein's theorem. We also need the following Theorem (see Koosis,  \cite[pp. 49-50]{Koosis}) which is related to the previously stated Payley-Wiener theorems:

\begin{lem} Let $f(x)$ be an entire function of exponential type. Then
$$ \int_0^\infty \frac{\log^+ |f(x)|dx}{1+x^2}<\infty \implies \int_0^\infty \frac{\log^- |f(x)|}{1+x^2} dx<\infty.
$$
\end{lem}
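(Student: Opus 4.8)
The plan is to deduce the inequality from the classical theory of functions of bounded type (the Nevanlinna class) in a half-plane, applied not to $f$ itself but to its reciprocal. First I would record the elementary identity $\log^-\abs f=\log^+\abs{1/f}$, so that the desired conclusion is literally the statement that the $\log^+$ of the boundary values of $1/f$ on $\R$ is integrable against $dx/(1+x^2)$; equivalently, since $\log^+\abs f$ is integrable by hypothesis, it suffices to bound $\int_0^\infty\abs{\log\abs{f(x)}}\,dx/(1+x^2)$.

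The heart of the matter is that an entire function of exponential type satisfying $\int_0^\infty\log^+\abs{f(x)}\,dx/(1+x^2)<\infty$ belongs to the Cartwright class, and hence (a theorem of Krein) is of bounded type in the upper half-plane $\C^+$ and, by the symmetric statement, in the lower half-plane $\C^-$. I expect this to be the main obstacle, for two reasons: it is a Phragmén--Lindelöf type assertion that carries the genuine analytic content, and one must be slightly careful that the one-sided hypothesis over $[0,\infty)$ really does force the two-sided log-integrability needed to invoke Krein. This is true for functions of exponential type (it is part of the classical theory, cf. Koosis), but the crude bound $\log^+\abs{f(x)}\ll 1+\abs x$ coming from exponential type is by itself useless here, so the argument is genuinely not formal.

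Granting that $f$ is of bounded type in $\C^+$, the reciprocal $1/f$ is a meromorphic function of bounded type in $\C^+$, being a quotient of two bounded analytic functions there. For any function of bounded type in $\C^+$ the boundary values satisfy the Poisson-weight integrability $\int\log^+\abs{\cdot}\,dx/(1+x^2)<\infty$ --- precisely the condition ensuring that the outer factor in the Nevanlinna factorization is well defined. Applying this to $1/f$ gives $\int_0^\infty\log^+\abs{1/f(x)}\,dx/(1+x^2)<\infty$, which is $\int_0^\infty\log^-\abs{f(x)}\,dx/(1+x^2)<\infty$, as desired.

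As an alternative to quoting Krein's theorem, I would consider a direct Poisson--Jensen argument: apply the Poisson--Jensen formula on the disks $\abs z\le R$ and let $R\to\infty$. Exponential type forces the zero-counting function to satisfy $n(r)\ll r$ (via Jensen's formula), and together with the hypothesis on $\log^+\abs f$ this yields an absolutely convergent Blaschke-type condition on the zeros of $f$ off the real axis; from the resulting representation of $\log\abs{f(x)}$ on $\R$ --- a Poisson-type integral of $\log\abs f$, plus a linear term, plus an absolutely convergent sum of elementary Blaschke contributions, each of them $(1+x^2)^{-1}$-integrable --- one reads off the bound. Either route reduces the problem to the same half-plane Nevanlinna machinery; I would expect the bookkeeping in this second route to be heavier but to make the role of the zeros more transparent.
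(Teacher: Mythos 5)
The paper does not prove this lemma: it is cited to Koosis, pp.~49--50, and the route you propose --- Cartwright class, bounded type in both half-planes via Krein's theorem, then the Nevanlinna $\log^+$-integrability of boundary values applied to $1/f$ --- is precisely the standard argument there. So your overall strategy is the right one; there is no proof in the paper to compare it against.

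There is, however, a genuine gap in the step you yourself single out as ``the main obstacle,'' and it cannot be closed, because the lemma is false as literally stated. Take $f(z)=e^{-z}$. It is entire of exponential type $1$, and for $x\geq 0$ one has $\log|f(x)|=-x\leq 0$, so $\log^+|f(x)|\equiv 0$ on $[0,\infty)$ and the hypothesis $\int_0^\infty \log^+|f(x)|\,dx/(1+x^2)<\infty$ holds trivially; yet $\log^-|f(x)|=x$ and $\int_0^\infty x\,dx/(1+x^2)=\infty$, so the conclusion fails. Thus your assertion that the one-sided hypothesis over $[0,\infty)$ ``really does force the two-sided log-integrability needed to invoke Krein'' is wrong: for this $f$ one has $\int_{-\infty}^0 \log^+|f(x)|\,dx/(1+x^2)=\infty$, so $f$ is not in the Cartwright class and the bounded-type machinery never applies. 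What Koosis actually proves, and what the paper actually needs, is the statement with the two-sided hypothesis $\int_{-\infty}^\infty \log^+|f(x)|\,dx/(1+x^2)<\infty$; under that hypothesis everything in your first three paragraphs goes through, and your alternative Poisson--Jensen route would also work. In the paper's application the function is $\hat g$ for $g\in L^2(0,H)\subset L^1(0,H)$ of compact support, so $\hat g$ is bounded on $\R$, $\log^+|\hat g|$ is bounded, and the two-sided condition holds automatically. The fix is therefore not to try to derive the two-sided condition from the one-sided one, but to strengthen the hypothesis of the lemma to its two-sided form --- which is the form in which Koosis states it and the form in which it is used.
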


\subsection{Proof of approximation theorems}

We are almost ready to  prove Theorem 3. We use our version of the Pechersky rearrangement theorem.
\subsubsection{Another lemma on entire functions of finite type}

 First we will prove a simple lemma that we will use.
\begin{lem}
  Suppose  $f(x)$ is a a continuous function with compact support and that $\hat f(0)=1$. 
If  $\inf_{\hat f(z)=0} |x-z| >\delta>2\varepsilon(x)$, then
\begin{gather}
 \min_{t \in[x,x+\varepsilon(x)]} \abs{\hat f(t)}=\log \abs{\hat f(x)}+\Oh{\delta^{-1} \varepsilon(x) x}.
\end{gather}
\end{lem}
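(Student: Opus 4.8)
The plan is to read the displayed identity as
$\log\min_{t\in[x,x+\varepsilon(x)]}\abs{\hat f(t)}=\log\abs{\hat f(x)}+\Oh{\delta^{-1}\varepsilon(x)x}$
(the $\log$ on the left‑hand side appears to be a typo in the statement), i.e. the claim is that $\log\abs{\hat f}$ oscillates by at most $\Oh{\delta^{-1}\varepsilon(x)x}$ over the short interval $[x,x+\varepsilon(x)]$, which lies at distance $>\delta/2$ from every zero of $\hat f$. Since $f$ is continuous with support in some $[a,b]$, the function $\hat f$ is entire of finite exponential type $\tau=\max(\abs a,\abs b)$, is bounded on $\R$ by $\norm f_1$, and $\hat f(0)=1$. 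For $x$ in a fixed bounded range the conclusion is immediate from smoothness of $\hat f$ and $\hat f\neq0$ near $x$, so I would assume $x$ large throughout.

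First I would reduce to a bound on the logarithmic derivative. On the disc $D(x,\delta)$ the function $\hat f$ has no zeros, so $\log\hat f$ has a single‑valued holomorphic branch there and, for $t\in[x,x+\varepsilon(x)]\subset D(x,\delta/2)$, $\log\abs{\hat f(t)}-\log\abs{\hat f(x)}=\Re\int_x^t \hat f'(s)/\hat f(s)\,ds$. Hence it suffices to prove $\abs{\hat f'(z)/\hat f(z)}=\Oh{\delta^{-1}x}$ for $z\in D(x,\delta/2)$; integrating over an interval of length $\varepsilon(x)$ then yields the error term, and taking the minimum over $t$ (using $\min_t\abs{\hat f(t)}\le\abs{\hat f(x)}$) finishes.

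To get the logarithmic–derivative bound I would work in the large disc $D(0,R_0)$ with $R_0=2x$. By Jensen's formula, using $\hat f(0)=1$ and $\abs{\hat f(2R_0e^{i\theta})}\le\norm f_1 e^{2\tau R_0}$, the number $N$ of zeros $\rho_1,\dots,\rho_N$ of $\hat f$ inside $D(0,R_0)$ is $\Oh{R_0}=\Oh x$. Form the finite Blaschke‑type product $P(z)=\prod_{n}\frac{R_0(z-\rho_n)}{R_0^2-\bar\rho_n z}$, so $\hat f/P$ is holomorphic and zero‑free on $D(0,R_0)$ and $\abs P=1$ on $\abs z=R_0$, and split $\hat f'/\hat f=P'/P+(\log(\hat f/P))'$. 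The first term $\sum_n(\frac1{z-\rho_n}+\frac{\bar\rho_n}{R_0^2-\bar\rho_n z})$ is $\Oh{N/\delta}=\Oh{x/\delta}$ on $D(x,\delta/2)$, because every zero is at distance $>\delta/2$ from $z$ and $\abs{R_0^2-\bar\rho_n z}\gg R_0^2$. For the second term, $\psi:=\log(\hat f/P)$ is holomorphic on $D(0,R_0)$ with $\Re\psi=\log\abs{\hat f}$ on $\abs z=R_0$, so $\sup_{\abs z=R_0}\Re\psi\le\log\norm f_1+\tau R_0=\Oh x$, while $\abs{\psi(0)}=-\log\abs{P(0)}=\sum_n\log(R_0/\abs{\rho_n})=\Oh x$ by the same Jensen estimate; Borel–Carathéodory then gives $\abs\psi=\Oh x$ on $D(0,x+\delta)\supset D(x,\delta)$, and Cauchy's estimate yields $\abs{\psi'}=\Oh{x/\delta}$ on $D(x,\delta/2)$. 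Adding the two contributions gives $\abs{\hat f'(z)/\hat f(z)}=\Oh{x/\delta}$ there.

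The main obstacle is exactly this bookkeeping with the zeros of $\hat f$: one must simultaneously control the zeros sitting just outside the forbidden radius $\delta$ (handled by the explicit $1/(z-\rho_n)$ estimate together with the hypothesis $\delta>2\varepsilon(x)$) and the possibly $\asymp x$ zeros in the bulk of $D(0,R_0)$ (handled by the Jensen zero‑count plus Borel–Carathéodory applied to the zero‑free quotient $\hat f/P$); it is precisely the $\Oh x$ zero count that produces the factor $x$ in the error term. A fallback, should the disc computation become awkward, would be to represent $\log\hat f(x+z)-\log\hat f(x)$ by a contour integral and control the tails with Lemma 5, but the Poisson–Jensen/Blaschke route above seems the most transparent.
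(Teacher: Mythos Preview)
Your argument is correct and reaches the same conclusion as the paper, but by a somewhat different route. The paper works directly with the Hadamard product $\hat f(z)=e^{az}\prod_k(1-z/z_k)$: writing
\[
\log\abs{\hat f(x+h)}-\log\abs{\hat f(x)}=\Re(ah)+\sum_k\log\abs{1-\frac{h}{z_k-x}},
\]
expanding each logarithm as $h/(z_k-x)+O(h^2/\abs{z_k-x}^2)$, and then invoking the zero-count $n(r)\le cr+O(1)$ for entire functions of finite type to bound $\sum_k 1/(z_k-x)\ll\delta^{-1}x$ and $\sum_k 1/\abs{z_k-x}^2\ll\delta^{-2}x$. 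You instead bound the logarithmic derivative: localize to a disc of radius $2x$, peel off the zeros there via a finite Blaschke product, control the zero-free quotient by Borel--Carath\'eodory and Cauchy's estimate, and integrate. Both arguments rest on exactly the same structural input---the $O(r)$ zero count---and in both it is this count that produces the factor $x$ in the error term. The paper's version is shorter and more elementary; yours carries a bit more machinery but has the virtue of sidestepping any convergence issues in the infinite Hadamard product (which for a general function of exponential type would strictly require Weierstrass convergence factors $e^{z/z_k}$).
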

\begin{proof}
  Since $f(t)$ is a continuous function with compact support then $\hat f(t)$ is an entire function of finite type. Since $\hat f(0)=1$ it will have the Hadamard product
  \begin{gather*}
   \hat f(z)=e^{az} \prod_{k=1}^\infty \p{1-\frac{z}{z_k}}.
  \end{gather*}
   By taking the logarithm of this we get 
    \begin{gather*}
   \log \abs{\hat f(z)}= \Re(az)+ \sum_{k=1}^\infty \log \abs{1-\frac{z}{z_k}}.
  \end{gather*} 
   Let $t=x+h$. We find that
   \begin{align*} 
\label{ah}
   \log \abs{\hat f(x+h)} - \log \abs{\hat f(x)} &=
 \Re(ah)+   \sum_{k=1}^\infty \log \abs{1- \frac{h}{z_k-x}}  \\ 
    &=  \Oh{h} + \sum_{k=1}^\infty \frac h {z_k-x} + \Oh{\sum_{k=1}^\infty \frac {h^2} {\abs{z_k-x}^2}}. 
\end{align*}
Since $\hat f(z)$ is an entire function of finite type and  thus by the Theorem \cite[p. 15]{Koosis} we have that
\begin{gather*}
 n(r) \leq  cr+\Oh{1},
\end{gather*}
for some $c>0$, where $n(r)$ denotes the number of zeroes of $\hat f(z)$ with $|z| \leq r$. From this it follows that 
\begin{gather*}
\abs{\sum_{k=1}^\infty \frac 1 {z_k-x}} \ll \delta^{-1} x, \qquad   \sum_{k=1}^\infty \frac 1 {\abs{z_k-x}^2} 
\ll \delta^{-2}x.
\end{gather*}
Our lemma follows by noticing that $0 \leq h \leq \varepsilon(x)<\delta/2$.
\end{proof}

\subsubsection{Proof of Theorem 3}
It is clear that Theorem 3 follows by  proving the  two cases
\begin{enumerate}
  \item If the integral \eqref{iiii} is convergent, then our set of Dirichlet series is not dense in $L^2(0,H)$.
   \item If the integral  \eqref{iiii} is divergent, then our set of Dirichlet series is  dense in $L^2(0,H)$.
\end{enumerate}
{\em Proof of Case 1. The integral  \eqref{iiii} is convergent.} By Theorem 2 it follows that there exists some $\delta>0$ such that if $$P(t)=\sum_{n=1}^N a_n e^{-\lambda_n it},$$ is any polynomial with coefficients $|a_n| \leq A_n$. Then
\begin{gather*}
  \int_0^H \abs{1+P(t)}^2dt \geq \delta. 
\end{gather*}
This means that the function $f(t)=-1$ cannot be approximated by such a Dirichlet polynomial in $L^2(0,H)$-norm, and  thus this set of Dirichlet polynomials with bounded coefficients  is not dense in $L^2(0,H)$. \qed
\vskip 4pt

\noindent {\em Proof of Case 2. The integral  \eqref{iiii} is divergent.} Let
\begin{gather*} x_{n}(t)=A_n e^{-\lambda_n it},
\end{gather*}
for $0 \leq t \leq H$. By the first part of our version of the Pechersky's rearrangement theorem it is sufficient to prove that
\begin{gather} \label{ajt}
\sum_{n=1}^\infty \abs{\ip x {x_n}}=\infty,
\end{gather}
for any non trivial function  $x=f(t)$  in $L^2(0,H)$, in order to prove the first part of Theorem 3. Since 
\begin{gather*}
  \sum_{n=1}^\infty \abs{\ip {x_n} {x_n}}=H \sum_{n=1}^\infty A_n^2,
\end{gather*}
we see last two statements in Theorem 3 corresponds to the last two statements in the Pechersky rearrangement theorem. Thus it is sufficient to prove \eqref{ajt} to prove the last two statements of Theorem 3 also. 

We thus proceed to prove \eqref{ajt}. It is clear that
\begin{gather} \label{andaj}
 \sum_{n=1}^\infty \abs{\ip x {x_n}}=
 \sum_{n=1}^\infty A_n  \abs{\int_0^H f(t) e^{-\lambda_n i t} dt}  = 2 \pi \sum_{n=1}^\infty A_n  \abs{\hat f(\lambda_n)}. 
\end{gather}
Let us choose 
\begin{gather} \label{deltadef}
  \delta= \frac 1 {8e H}.
\end{gather}
The integral condition assures that the  limit of the decreasing positive function $\varepsilon(x)$  is zero. Thus we can find some positive number $X_1$  so that $\varepsilon(X_1)<\delta/2$.  We will now disregard  the $\lambda_n<X_1$, in the sum \eqref{andaj}. This can be done if we are only interested in determining whether \eqref{andaj} is convergent, since that sum will be finite. 
By dividing the remaining sum into sub intervals 
$[X_k,X_{k+1}]$, such that   $X_{k+1}=X_{k}+ \varepsilon(X_k)$  for $k \geq 1$, we see that
$$\sum_{n=1}^\infty A_n |\hat f(\lambda_n)| \geq \sum_{k=1}^\infty \min_{X_k \leq \lambda_n \leq X_{k+1}}  |\hat f(\lambda_n)|  \sum_{X_k \leq \lambda_n \leq X_{k+1}} A_n. $$
By the condition \eqref{condition} this is greater than something of the order
\begin{gather*}
 \sum_{k=1}^\infty \p{\min_{X_k \leq t \leq  X_{k+1}}  \abs{\hat f(t)}} (X_{k+1}-X_k) \Lambda(X_k). \end{gather*}
By replacing the sum with an integral  this can be estimated  from below by
$$
 \int_{X_1}^\infty \Lambda(x) \min_{t \in [x,x+\varepsilon(x)]} |\hat f(t)| dx.
$$
Since $x \geq \log (x+1) /(1+x^2)$ for $x>0$, it follows that this can be estimated from below by
\begin{gather} \label{abaj}
 \int_{X_1}^\infty \frac{\log \p{1+\Lambda(x) \min_{t \in [x,x +\varepsilon(x)]} |\hat f(t)|}} {1+x^2} dx.
\end{gather}
Now, Let $\{z_k \}_{k=1}^\infty$ be the zeroes of the entire function $\hat f(z)$. Let us  consider \eqref{abaj}  for the case where $|z_k-x|>\delta$ for all zeroes $z_k$ of $\hat f$.   Since the integrand is positive, when estimating the integral from below we can discard the integral when $|z_k-x|<\delta$ for some $z_k$ such that $\hat f(z_k)=0$. By the fact that the logarithm-function is an increasing function we get the lower bound
\begin{gather*} 
 \int_{x \geq X_1, |x-z_k| > \delta} \frac{\log \p{\Lambda(x) \min_{t \in [x,x+\varepsilon(x)]} |\hat f(t)|}} {1+x^2} dx. 
\end{gather*}
By using the logarithm laws this integral divides into
\begin{gather} \label{abajajaj}
 \int_{x \geq X_1, \, \abs{x-z_k} > \delta} \frac{\log \p{\Lambda(x)}} {1+x^2} dx + 
  \int_{x \geq X_1, \, \abs{x-z_k} > \delta} \frac{\log \p{\min_{t \in [x,x+\varepsilon(x)]} |\hat f(t)|}} {1+x^2} dx.
\end{gather}
By Lemma 6 we see that the fact that $|z_k-x|>\delta>2 \varepsilon(x)$ implies that
\begin{gather*}
 \min_{t \in [x,x+\varepsilon(x)]} \log |\hat f(t)|  =\log |\hat f (x)|+ \Oh{\varepsilon(x) \delta^{-1} x}
\end{gather*}
This allows us to estimate the second integral in \eqref{abajajaj} with
\begin{gather} \label{cd1}
   \int_{x \geq X_1,|x-z_k|>\delta} \frac{ \log |\hat  f(x)|} {1+x^2} dx+  \Oh {\delta^{-1} \int_{x \geq X_1,|x-z_k|>\delta} \frac{\varepsilon(x)} x dx}.
\end{gather}
The second integral in Eq. \eqref{cd1} is finite by the condition on $\varepsilon(x)$ in the Theorem. The first integral can be estimated by Lemma 5, and is bounded. Thus it is sufficient to prove that
\begin{gather} \label{hud}
 \int_{|x-z_k|>\delta, x \geq X_1} \frac{ \log \Lambda (x)} {1+x^2} dx 
\end{gather}
is divergent.

By Theorem \cite[p. 15]{Koosis} we have
\begin{gather*}
 n(r) \leq  eHr+\Oh{1},
\end{gather*}
This means that there are a maximum of $2eHX+\Oh{1}$ zeroes of $\hat f(x)$ in the interval $[X,2X]$. Each zero will remove $2 \delta$ from the measure of the set of $x$ such that $\abs{x-z_k}>\delta$. In other words:
\begin{gather*}
  \int_{X\leq x \leq 2X, |x-z_k|>\delta} 1 \, dx \,  \geq  \, X-4eH \delta X + \Oh{1} \geq \frac X 2 + \Oh{1}.
 \end{gather*}
by the choice of $\delta$ in Eq. \eqref{deltadef}. Since $\Lambda(x)$ is an increasing function it follows from this inequality that the integral $\eqref{hud}$ can be estimated from below by a positive constant times
\begin{gather*} 
 \int_{x \geq X_1} \frac{ \log \Lambda (x)} {1+x^2} dx 
\end{gather*}
 Since this integral is divergent by the conditions in the theorem, the integral \eqref{hud} and  the sum in \eqref{ajt} that is bounded from below by this integral, are also divergent for any non trivial function $x=f(t)$ in $L^2(0,H)$. This concludes our proof of Theorem 3. \qed

\section{Generalized versions of the refined Ramachandra problem}

It is clear that Theorem 1 follows from Theorem 2 and Theorem 4. However, since we stated those theorems in terms of general Dirichlet series we will choose to state Theorem 1 in more generality as well.

\subsection{Generalized Dirichlet polynomials}

First we consider the case of generalized Dirichlet polynomials.
\begin{thm}
 Suppose  $A_n>0$ and that $\lambda_n$ fulfill the Dirichlet condition \eqref{dircon}. Let 
\begin{gather*}
  \Lambda(X)=\sum_{\lambda_n \leq X} A_n,
\qquad \text{and} \qquad  \int_1^\infty \frac{\varepsilon(x)} x dx <\infty\\ \intertext{for some positive decreasing function $\varepsilon(x)$ and suppose that}
    \Lambda(X+Y)-\Lambda(X) \gg Y \Lambda(X), \qquad \qquad (\varepsilon(X) \ll Y \ll 1),
 \end{gather*}
 for $X \geq X_0$. Then we have that for any $H>0$
 \begin{gather*}
   \lim_{N \to \infty} \min_{|a_n| \leq A_n} \int_0^H \abs{1+\sum_{n=1}^N a_n e^{-\lambda_n it}}^2 dt
>0,  \\ \intertext{if and only if}
    \int_1^\infty \frac {\log \Lambda(x)}{{x^2}}dx<\infty.  
  \end{gather*}
  The same conclusion holds true if $\min_{|a_n| \leq A_n}$ is replaced by $\min_{|a_n| = A_n}$ under the additional assumption that $$ \sum_{n=1}^\infty A_n^2<\infty.$$
 \end{thm}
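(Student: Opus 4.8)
The plan is to derive this theorem without any essentially new argument, by combining the lower bound of Theorem 2 with the density statement of Theorem 3: the two directions of the ``if and only if'' correspond to these two results, and all the remaining work is confined to matching hypotheses and to the harmless discrepancy between the $L^1$ and $L^2$ norms.

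First I would handle the direction $\int_1^\infty \log\Lambda(x)\,x^{-2}\,dx<\infty \Rightarrow$ the limit is positive. Absorbing the constant into the series by setting $\lambda_0=0$ and $A_0=1$, the expression $1+\sum_{n=1}^N a_n e^{-\lambda_n it}$ becomes a Dirichlet polynomial of precisely the type covered by Theorem 2 (passing to this normalization changes $\Lambda$ only by an additive constant, which affects neither the hypothesis nor the conclusion). Theorem 2 then yields a $\delta>0$ with $\int_0^H\abs{1+\sum_{n=1}^N a_n e^{-\lambda_n it}}\,dt\geq\delta$ for every $N$ and all $\abs{a_n}\leq A_n$, and the Cauchy--Schwarz inequality $\p{\int_0^H\abs{g}\,dt}^2\leq H\int_0^H\abs{g}^2\,dt$ upgrades this to $\int_0^H\abs{1+\sum_{n=1}^N a_n e^{-\lambda_n it}}^2\,dt\geq\delta^2/H$ for the same coefficients. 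Since $\{(a_1,\dots,a_N):\abs{a_n}\leq A_n\}$ is compact and the integral depends continuously on the coefficients, the inner minimum is attained; it is non-increasing in $N$ (set the new coefficient to $0$), so the limit in $N$ exists and is $\geq\delta^2/H>0$. For the variant with $\abs{a_n}=A_n$ one only needs $\{\abs{a_n}=A_n\}\subset\{\abs{a_n}\leq A_n\}$, which forces each of those minima to be $\geq\delta^2/H$ as well.

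For the reverse direction, $\int_1^\infty\log\Lambda(x)\,x^{-2}\,dx=\infty\Rightarrow$ the limit is $0$, I would first check that the hypotheses assumed here are exactly the ones under which Theorem 3 is proved: the function $\varepsilon(x)$ is the same, and re-inspecting Case 2 of the proof of Theorem 3 one sees that its only use of the growth hypothesis is in the shape $\Lambda(X_{k+1})-\Lambda(X_k)\gg(X_{k+1}-X_k)\Lambda(X_k)$ with $X_{k+1}-X_k=\varepsilon(X_k)$, i.e.\ exactly $\Lambda(X+Y)-\Lambda(X)\gg Y\Lambda(X)$ for $\varepsilon(X)\ll Y\ll1$. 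Thus Theorem 3 applies, and the set $\{\sum_{n\geq1}a_n e^{-\lambda_n it}:\abs{a_n}\leq A_n\}$ is dense in $L^2(0,H)$; in particular $-1$ lies in its closure, so for every $\epsilon>0$ there are $N$ and coefficients $\abs{a_n}\leq A_n$ with $\int_0^H\abs{1+\sum_{n=1}^N a_n e^{-\lambda_n it}}^2\,dt<\epsilon$, and the limit in $N$ is $0$. When moreover $\sum_n A_n^2<\infty$, the stronger parts of Theorem 3 give density of the set with $\abs{a_n}=A_n$, and even density of the convergent Dirichlet series $\sum_{n\geq1}a_n e^{-\lambda_n it}$ with $\abs{a_n}=A_n$; approximating $-1$ in $L^2(0,H)$ by such a series (with $\abs{a_0}=A_0=1$ accounting for the constant term) and passing to its partial sums, which converge to it in $L^2(0,H)$, shows that $\int_0^H\abs{1+\sum_{n=1}^N a_n e^{-\lambda_n it}}^2\,dt<\epsilon$ for all sufficiently large $N$, so the limit is $0$ in that case too.

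The one point that needs care --- bookkeeping rather than a real obstacle --- is the reconciliation of the index and normalization conventions: Theorem 3 is stated with sums running from $n=2$ and with no distinguished constant term, whereas here the ``$1$'' is the $\lambda_0=0$ term with $A_0=1$, so I must make sure the density supplied by Theorem 3 genuinely yields an $L^2$-approximation of $-1$ by $\sum_{n\geq1}a_n e^{-\lambda_n it}$ respecting every prescribed constraint on the coefficients (including $\abs{a_0}=A_0=1$ in the equality case, which the normalization provides). The only other asymmetry is that Theorem 2 is phrased with an $L^1$ integral while the present theorem uses an $L^2$ integral, which is exactly what the Cauchy--Schwarz step repairs; conversely, the $L^2$-density in Theorem 3 transfers to the desired $L^2$ statement without any loss.
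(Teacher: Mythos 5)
Your proposal follows the paper's proof exactly: Theorem 2 together with Cauchy--Schwarz gives the lower-bound direction, and Theorem 3's $L^2$-density applied to the target $-1$ gives the vanishing direction, with the $\abs{a_n}=A_n$ variant handled by inclusion in one direction and by the equality clause of Theorem 3 in the other. Your additional care about the existence of the limit, the index normalization, and the precise form of the growth hypothesis actually used in the proof of Theorem 3 is sound, but it does not change the route.
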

\begin{proof}
 From Theorem 2 it follows that if 
 \begin{gather*} 
   \int_1^\infty \frac {\log \Lambda(x)}{{x^2}}dx<\infty, \\ \intertext{then}
\lim_{N \to \infty} \min_{|a_n| \leq A_n} \int_0^H \abs{1+\sum_{n=1}^N a_n e^{-\lambda_n it}} dt>0, \\ \intertext{and the lower bound follows from the Cauchy-Schwarz inequality. By Theorem 3 it follows that if}
\int_1^\infty \frac {\log \Lambda(x)}{{x^2}}dx=\infty, \\ \intertext{then the Dirichlet polynomials}
\sum_{n=1}^N a_n e^{-\lambda_n it}, \qquad |a_n|\leq A_n, \\ \intertext{are dense in $L^2(0,H) $. In particular it means that $f(t)=-1$ can be approximated by the Dirichlet polynomials and that for each $\epsilon>0$ there exists and  $N$ and $|a_n| \leq A_n$ such that}
  \int_0^H \abs{1+ \sum_{n=2}^N a_n e^{-\lambda_n it}}^2 dt<\epsilon.
  \end{gather*}
   Since $\epsilon$ can be chosen to be arbitrarily small this proves Theorem 4 in the case when the integral is infinite.
\end{proof}

\subsection{Classical Dirichlet polynomials}

We will now apply these results  on classical Dirichlet series: 
\begin{thm} Suppose that $A_n$ are positive numbers such that
\begin{gather*} 
\frac 1 M \sum_{n=T}^{T+M} A_n  \asymp \Phi(T), \qquad  \qquad T/(\log \log T)^{1+\delta} \leq M \leq T. \\ \intertext{for some $\delta>0$, $T \geq T_0$ and some positive increasing function $\Phi(n)$. Then}
\lim_{N \to \infty} \min_{|a_n| \leq A_n} \int_0^H \left|1+\sum_{n=2}^N a_n n^{it-1} \right|^2 dt > 0,  \\ \intertext{if and only if}
  \sum_{n=2}^\infty  \frac {\log{\Phi(n)}}{n \log^2 n}<\infty. 
\end{gather*}
\end{thm}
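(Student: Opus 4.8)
The plan is to read Theorem~5 off the general results, applied to the frequencies $\lambda_n=\log n$. Since $n^{it-1}=n^{-1}e^{it\log n}$ and complex conjugation is an isometry of $L^2(0,H)$, for every $N$ and every choice of $a_n$ we have, with $b_n=\overline{a_n}/n$,
\begin{gather*}
 1+\sum_{n=2}^N a_n n^{it-1}=\overline{\,1+\sum_{n=2}^N b_n e^{-i t\log n}\,},\qquad\text{so}\qquad
 \abs{1+\sum_{n=2}^N a_n n^{it-1}}=\abs{1+\sum_{n=2}^N b_n e^{-i t\log n}},
\end{gather*}
and $\abs{a_n}\le A_n$ is equivalent to $\abs{b_n}\le A_n/n$. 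Thus Theorem~5 is precisely Theorem~2 (in the convergent case) and the divergent case of Theorem~3/Theorem~4, applied to the generalised Dirichlet polynomial with $\lambda_0=0$, $A_0=1$, $\lambda_n=\log n$ and coefficient bounds $A_n/n$ for $n\ge2$ — provided one checks (i) that the integral condition there matches the series condition here, and (ii) that the averaging hypothesis on $A_n$ supplies the regularity needed in the divergent case.

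For (i), put $\Lambda(x)=\sum_{n\le e^x}A_n/n$. Applying the averaging hypothesis with $T=M=2^j$ gives $\sum_{2^j\le n\le2^{j+1}}A_n\asymp2^j\Phi(2^j)$, hence (dividing by $n\asymp2^j$) $\sum_{2^j\le n\le2^{j+1}}A_n/n\asymp\Phi(2^j)$; summing dyadic blocks and using that $\Phi$ is increasing gives $\Lambda(x)\ll x\,\Phi(e^x)$, so $\log\Lambda(x)\le\log\Phi(e^x)+\log x+\Oh{1}$. As $\int_1^\infty x^{-2}\log x\,dx<\infty$, finiteness of $\int_1^\infty x^{-2}\log\Phi(e^x)\,dx$ forces finiteness of $\int_1^\infty x^{-2}\log\Lambda(x)\,dx$; and the substitution $u=e^x$ identifies $\int_1^\infty x^{-2}\log\Phi(e^x)\,dx$ with $\int_e^\infty\frac{\log\Phi(u)}{u\log^2 u}\,du$, which — $\Phi$ being increasing — converges if and only if $\sum_{n\ge2}\frac{\log\Phi(n)}{n\log^2 n}$ converges. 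This makes the two conditions equivalent.

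In the convergent case this already gives one direction: $\int_1^\infty x^{-2}\log\Lambda(x)\,dx<\infty$, so Theorem~2 applies to the data above and yields $\lim_{N\to\infty}\min_{\abs{b_n}\le A_n/n}\int_0^H\abs{1+\sum b_n e^{-i t\log n}}\,dt>0$; passing back through the identity above and using $\int_0^H\abs{g}\,dt\le H^{1/2}\p{\int_0^H\abs{g}^2\,dt}^{1/2}$ gives the asserted positive lower bound for the $L^2$-integral, uniformly in $N$.

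For the divergent case one re-runs the Pechersky argument from the proof of Theorem~3. Fix $f\in L^2(0,H)$, $f\neq0$; by the first part of the Pechersky rearrangement theorem it suffices to prove $\sum_n\frac{A_n}{n}\abs{\hat f(\log n)}=\infty$. Choose $\varepsilon(x)=(\log x)^{-1-\delta}$ (so $\int_1^\infty\varepsilon(x)/x\,dx<\infty$) and cut the frequency axis into blocks $[X_k,X_{k+1}]$, $X_{k+1}=X_k+\varepsilon(X_k)$. The single point at which the hypothesis of Theorem~5 enters is the block estimate
\begin{gather*}
 \sum_{X_k\le\log n\le X_{k+1}}\frac{A_n}{n}\;\asymp\;\frac1{e^{X_k}}\sum_{e^{X_k}\le n\le e^{X_{k+1}}}A_n\;\asymp\;\varepsilon(X_k)\,\Phi(e^{X_k}),
\end{gather*}
valid because the relevant interval has length $M\asymp e^{X_k}\varepsilon(X_k)=T(\log\log T)^{-1-\delta}$ with $T=e^{X_k}$, i.e.\ exactly the lower end of the range $T/(\log\log T)^{1+\delta}\le M\le T$ in the hypothesis. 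This plays the role that the gap condition $\Lambda(X+Y)-\Lambda(X)\gg Y\Lambda(X)$ plays in the proof of Theorem~3, but with $\Phi(e^x)$ in place of $\Lambda(x)$. With this substitution the remainder of that proof goes through verbatim: one minorises $\sum_n\frac{A_n}{n}\abs{\hat f(\log n)}$ by $\int_{X_1}^\infty\Phi(e^x)\min_{t\in[x,x+\varepsilon(x)]}\abs{\hat f(t)}\,dx$, then by $\int_{X_1}^\infty\p{1+x^2}^{-1}\log\p{1+\Phi(e^x)\min_{t\in[x,x+\varepsilon(x)]}\abs{\hat f(t)}}\,dx$, discards the fixed-radius neighbourhoods of the zeros of $\hat f$ using $n(r)\le eHr+\Oh{1}$ and the choice of radius $1/(8eH)$ from the proof of Theorem~3, invokes Lemma~6 to replace $\min_{t\in[x,x+\varepsilon(x)]}\log\abs{\hat f(t)}$ by $\log\abs{\hat f(x)}+\Oh{\varepsilon(x)x}$, and controls the error terms by $\int_1^\infty\varepsilon(x)/x\,dx<\infty$ and, for $\int\p{1+x^2}^{-1}\log\abs{\hat f(x)}\,dx$, by Lemma~5; what remains is a positive multiple of $\int_1^\infty\p{1+x^2}^{-1}\log\Phi(e^x)\,dx=+\infty$. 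Hence the polynomials $\{\sum_{n=2}^N a_n n^{it-1}:\abs{a_n}\le A_n\}$ are dense in $L^2(0,H)$, and applying this to the point $-1$ shows the $L^2$-minimum tends to $0$. The main obstacle is exactly this bookkeeping: one must notice that the averaging hypothesis does \emph{not} give the gap condition of Theorem~4 for the genuine partial sum $\Lambda(x)=\sum_{n\le e^x}A_n/n$ — which can exceed $\Phi(e^x)$ by a factor $\asymp\log x$ (e.g.\ $\Phi\equiv1$, or $\Phi(n)=e^{\log n/\log\log n}$) — but does give it with $\Phi(e^x)$ in the role of $\Lambda$, these two functions being interchangeable in the convergence criterion; so Theorem~4 is not quoted as a black box, but its proof adapts with this one modification.
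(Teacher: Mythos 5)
Your proposal is correct, and it in fact catches a genuine lacuna in the paper's one-line proof of Theorem~5. The paper simply says ``this follows from Theorem~4 with $\lambda_n=\log(n+1)$ and $\varepsilon(x)=(\log(x+1))^{-1-\delta}$,'' but after the (necessary) complex-conjugation step the coefficient bounds in the exponential-polynomial picture are $A_n/n$, so the $\Lambda$ of Theorem~4 is $\Lambda(x)=\sum_{n\le e^x}A_n/n$ --- and for this $\Lambda$ the regularity hypothesis $\Lambda(X+Y)-\Lambda(X)\gg Y\Lambda(X)$ of Theorem~4 is \emph{not} a consequence of the averaging hypothesis of Theorem~5 (already for $\Phi\equiv1$ one has $\Lambda(x)\asymp x$ but $\Lambda(X+Y)-\Lambda(X)\asymp Y$, so the gap condition fails). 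You correctly observe that what the averaging hypothesis does supply is the block estimate $\sum_{X_k\le\lambda_n\le X_{k+1}}A_n/n\asymp\varepsilon(X_k)\Phi(e^{X_k})$, that this is exactly the ingredient used in the proof of Theorem~3 (in place of the formal gap condition for $\Lambda$), and that $\log\Lambda(x)$ and $\log\Phi(e^x)$ differ by $\Oh{\log x}$, so the two log-integral criteria are equivalent. Hence Theorem~5 is true, but its derivation should be ``re-run the proof of Theorem~3 with $\Phi(e^x)$ in the role of $\Lambda(x)$'' rather than ``cite Theorem~4 as a black box.'' Your convergent-case direction (Theorem~2 in $L^1$, then Cauchy--Schwarz to pass to $L^2$) and your translation of $\int_1^\infty x^{-2}\log\Phi(e^x)\,dx$ into $\sum_n\log\Phi(n)/(n\log^2 n)$ are both as in the paper.

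One small inaccuracy: you say $\Lambda(x)$ ``can exceed $\Phi(e^x)$ by a factor $\asymp\log x$'' and cite $\Phi\equiv1$ as an example, but for $\Phi\equiv1$ the factor is $\asymp x$ (your second example, $\Phi(n)=e^{\log n/\log\log n}$, does give a factor $\asymp\log x$). This does not affect anything --- any unbounded factor already defeats the literal hypothesis of Theorem~4 while leaving the log-integral criterion unchanged --- but the phrasing should be corrected.
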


\begin{proof}
 This follows from Theorem 4 with $\lambda_n=\log (n+1)$, and  $\varepsilon(x)=(\log (x+1))^{-1-\delta}$. \end{proof}
\subsubsection{Proof of Theorem 1.}
Theorem 1 follows from using $A_n=\Phi(n)$ in Theorem 5. \qed

\subsubsection{Classical Dirichlet polynomials with arithmetical coefficients}

 We will  mention two other simple applications which also follows 
directly from Theorem 5. First for primes:
\begin{cor} Suppose  $\Phi(p)$ is an increasing positive function and $H>0$. Then 
\begin{gather*}\lim_{N \to \infty} \min_{|a_p| \leq \Phi(p)} \int_0^H \left|1+\sum_{\substack{p \text{ prime}. \\ p \leq N }} a_p p^{it-1} \right|^2 dt > 0,  \\ \intertext{if and only if}
  \sum_{p \text{ prime}}  \frac {\log{\Phi(p)}}{p \log p}<\infty. 
\end{gather*}
\end{cor}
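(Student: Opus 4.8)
The plan is to recognise the family in Corollary 1 as a family of generalized Dirichlet polynomials and to apply the very reduction by which Theorem 5 was obtained from Theorem 4, only with the integers replaced by the primes. Since $a_p\,p^{it-1}=(a_p/p)\,e^{it\log p}$, and since $t\mapsto -t$ followed by translation by $H$ is an isometry of $L^2(0,H)$ that merely multiplies each coefficient by a unimodular constant, the set $\{\,1+\sum_{p\le N}a_p\,p^{it-1}:|a_p|\le\Phi(p)\,\}$ is — as far as $L^2(0,H)$-norms and density are concerned — the set $\{\,1+\sum_k c_k\,e^{-i\lambda_k t}:|c_k|\le A_k\,\}$ with $\lambda_k=\log p_k$ ($p_k$ the $k$-th prime) and $A_k=\Phi(p_k)/p_k$. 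The relevant summatory function is then $\Lambda(x)=\sum_{\lambda_k\le x}A_k=\sum_{p\le e^x}\Phi(p)/p$, and Corollary 1 reduces to (i) the equivalence $\int_1^\infty x^{-2}\log\Lambda(x)\,dx<\infty\iff\sum_p\frac{\log\Phi(p)}{p\log p}<\infty$, plus (ii) checking the hypotheses actually used in each direction.

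For (i) I would sandwich $\Lambda$ with Mertens' theorem $\sum_{p\le Y}1/p=\log\log Y+O(1)$ and the monotonicity of $\Phi$: on one side $\Lambda(x)\le\Phi(e^x)(\log x+O(1))$, on the other $\Lambda(x)\ge\Phi(e^{x/2})\sum_{e^{x/2}<p\le e^x}1/p=(\log 2+o(1))\Phi(e^{x/2})$ (and also $\Lambda(x)\ge\Phi(2)(\log x+O(1))$). Hence $\log\Lambda(x)$ and $\log\Phi(e^x)$ differ by $O(\log\log x)$ in the appropriate one-sided senses, and since $\int_1^\infty x^{-2}\log x\,dx$ and $\int_1^\infty x^{-2}\log\log x\,dx$ both converge, $\int_1^\infty x^{-2}\log\Lambda(x)\,dx<\infty$ iff $\int_1^\infty x^{-2}\log\Phi(e^x)\,dx<\infty$. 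The substitution $x=\log t$ turns the latter into $\int_e^\infty\frac{\log\Phi(t)}{t\log^2 t}\,dt$, which by the integral test and the prime number theorem is finite iff $\sum_p\frac{\log\Phi(p)}{p\log p}<\infty$ — and, incidentally, iff $\sum_n\frac{\log\Phi(n)}{n\log^2 n}<\infty$, so the condition is the one already appearing in Theorem 1.

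For the implication ``the series converges $\Rightarrow$ the limit is positive'' I would simply quote Theorem 2, which carries no growth hypothesis; its single requirement, $\int_1^\infty x^{-2}\log\Lambda(x)\,dx<\infty$, is precisely (i). For the converse, ``the series diverges $\Rightarrow$ the set is dense in $L^2(0,H)$'', I would re-run the argument of Theorem 3, Case 2. By the first part of our version of the Pechersky rearrangement theorem, together with the identity for $\sum_p|\langle f,x_p\rangle|$ used there, it suffices to show $\sum_p\frac{\Phi(p)}{p}\,|\hat f(\log p)|=\infty$ for every nonzero $f\in L^2(0,H)$, where $\hat f$ is entire of exponential type with $n(r)\le eHr+O(1)$ as in Theorem 3. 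Partitioning the frequencies $\log p$ into blocks $[X_k,X_{k+1}]$, $X_{k+1}=X_k+\varepsilon(X_k)$, starting from a suitable $X_1$, with $\varepsilon(x)=(\log x)^{-1-\delta}$ (so $\int_1^\infty x^{-1}\varepsilon(x)\,dx<\infty$), and using the prime number theorem (with classical error term) in the form $\pi(e^{X+Y})-\pi(e^X)\gg Ye^{X}/X$ for $\varepsilon(X)\ll Y\ll 1$, one lower-bounds the sum by $\gg\int_{X_1}^\infty\frac{\Phi(e^x)}{x}\,\min_{t\in[x,x+\varepsilon(x)]}|\hat f(t)|\,dx$. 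Then, exactly as in Theorem 3 — discarding the $\delta$-neighbourhoods of the zeros of $\hat f$ for the $\delta$ of that proof, using the Hadamard factorisation, the bound $n(r)\le eHr+O(1)$, and Lemmas 5 and 6 — this reduces to the divergence of $\int_1^\infty x^{-2}\log(\Phi(e^x)/x)\,dx$, hence of $\int_1^\infty x^{-2}\log\Phi(e^x)\,dx$, hence, by (i), of $\sum_p\frac{\log\Phi(p)}{p\log p}$; the Pechersky theorem then yields the density.

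The step I expect to cause trouble — and the reason the proof is not a literal invocation of Theorem 3 or Theorem 4 — is the density direction. The abstract regularity hypothesis $\Lambda(X+Y)-\Lambda(X)\gg Y\Lambda(X)$ used there genuinely fails for $\Lambda(x)=\sum_{p\le e^x}\Phi(p)/p$ once $\Phi$ grows only barely fast enough for the series to diverge (e.g. $\Phi(t)=t^{1/\log\log t}$), because Mertens' theorem inserts an extra $\log\log$ factor. The fix is to use the prime number theorem itself in place of that hypothesis: it supplies $\pi(e^{X+Y})-\pi(e^X)\gg Ye^{X}/X$ directly, so the quantity integrated against $(1+x^2)^{-1}$ becomes $\Phi(e^x)/x$ rather than $\Lambda(x)$; since $\log(\Phi(e^x)/x)$ and $\log\Lambda(x)$ have the same integrability against $x^{-2}\,dx$, nothing is lost and the remainder of the Theorem 3 argument goes through unchanged.
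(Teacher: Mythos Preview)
Your argument is correct, and in fact more careful than what the paper offers: the paper simply asserts that Corollary~1 ``follows directly from Theorem~5'' (taking $A_n=\Phi(n)\mathbf 1_{n\text{ prime}}$, so that the averaged $\Phi$ in Theorem~5 becomes essentially $\Phi(T)/\log T$ by the prime number theorem in short intervals, and then noting that the convergence criterion is unchanged since $\int x^{-2}\log\log x\,dx<\infty$). Your route is the same in spirit --- reduce the convergent case to Theorem~2 and the divergent case to the Pechersky argument underlying Theorem~3 --- but you correctly flag that the regularity hypothesis $\Lambda(X+Y)-\Lambda(X)\gg Y\Lambda(X)$ of Theorem~4 is \emph{not} literally satisfied for $\Lambda(x)=\sum_{p\le e^x}\Phi(p)/p$ when $\Phi$ grows only just fast enough for divergence (for instance $\Phi(t)=t^{1/\log\log t}$ gives $\Lambda(X+Y)-\Lambda(X)\asymp Y\Phi(e^X)/X$ while $\Lambda(X)\asymp \Phi(e^X)\log X/X$, so the condition fails by a factor of $\log X$). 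Your fix --- inserting the prime number theorem directly to replace $\Lambda(x)$ by $\Phi(e^x)/x$ in the integral, and then observing that the extra $\log x$ is harmless against $x^{-2}\,dx$ --- is exactly the right repair, and the remainder of the Theorem~3 machinery (Lemmas~5 and~6, the zero-counting bound $n(r)\le eHr+O(1)$, discarding $\delta$-neighbourhoods of zeros) goes through unchanged. In short, you have supplied the verification that the paper's one-line citation leaves implicit.
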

We also mention the case of divisor coefficients:
\begin{cor} Suppose $\Phi(n)$ is an increasing positive function and $H>0$. Then 
\begin{gather*}\lim_{N \to \infty} \min_{|a_n| \leq \Phi(n)} \int_0^H \left|1+\sum_{n=2}^N  a_n d(n) n^{it-1} \right|^2 dt > 0  \\ \intertext{if and only if}
  \sum_{n=2}^\infty  \frac {d(n) \log{\Phi(n)}} {n (\log n)^3}<\infty. 
\end{gather*}
\end{cor}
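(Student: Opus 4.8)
The plan is to obtain Corollary~2 as the instance of Theorem~5 with coefficient sequence $A_n=d(n)\Phi(n)$. Writing the Dirichlet polynomial as $1+\sum_{n=2}^N (a_n d(n))\,n^{it-1}$ and using $d(n)>0$, the constraint $|a_n|\le\Phi(n)$ is the same as $|a_n d(n)|\le d(n)\Phi(n)=A_n$, so Corollary~2 is literally the $A_n=d(n)\Phi(n)$ case of Theorem~5, once we (a) verify the averaging hypothesis for this $A_n$ and (b) rewrite the resulting criterion in the stated form.

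For (a) I would use the classical Dirichlet divisor estimate $\sum_{n\le x}d(n)=x\log x+O(x)$, which gives $\sum_{n=T}^{T+M}d(n)=M\log T+O(T)$. Since the admissible range $T/(\log\log T)^{1+\delta}\le M\le T$ forces $M\log T/T\to\infty$, the remainder is absorbed and $\sum_{n=T}^{T+M}d(n)\sim M\log T$ uniformly there. Because $\Phi$ is increasing, $\Phi(T)\le\Phi(n)\le\Phi(T+M)$ on $[T,T+M]$, so $\tfrac1M\sum_{n=T}^{T+M}d(n)\Phi(n)\asymp\Phi(T)\log T$; putting $\Psi(T)=\Phi(T)\log T$, a positive increasing function, Theorem~5 applies and tells us the limit is positive if and only if $\sum_n\tfrac{\log\Psi(n)}{n\log^2 n}<\infty$, i.e.\ $\sum_n\tfrac{\log\Phi(n)+\log\log n}{n\log^2 n}<\infty$.

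For (b), the tail $\sum_n\tfrac{\log\log n}{n\log^2 n}$ converges unconditionally, so the criterion from (a) is equivalent to $\sum_n\tfrac{\log\Phi(n)}{n(\log n)^2}<\infty$, and a dyadic comparison converts this into the stated one. On a block $2^j\le n<2^{j+1}$ one has $\log n\asymp j$, $\sum_{2^j\le n<2^{j+1}}d(n)\asymp 2^j j$ (again by $\sum_{n\le x}d(n)\sim x\log x$), and $\log\Phi(2^j)\le\log\Phi(n)\le\log\Phi(2^{j+1})$ since $\Phi$ is increasing; hence \emph{both} $\sum_n\tfrac{\log\Phi(n)}{n(\log n)^2}$ and $\sum_n\tfrac{d(n)\log\Phi(n)}{n(\log n)^3}$ are, block by block, squeezed between constant multiples of $\log\Phi(2^j)/j^2$ and $\log\Phi(2^{j+1})/j^2$; since $\sum_j\log\Phi(2^{j+1})/j^2\asymp\sum_j\log\Phi(2^j)/j^2$, the two series converge or diverge together. (The companion Corollary~1 for primes goes the same way, with $\pi(x)\sim x/\log x$ controlling the short-interval averages, so that the effective weight becomes $\Phi(T)/\log T$.)

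The step I expect to require the most care is the uniform verification in (a): one must be certain the short-interval average of $d(n)\Phi(n)$ is comparable to a \emph{single} increasing function over the entire admissible range of $M$, which is precisely why the lower bound $M\ge T/(\log\log T)^{1+\delta}$ is needed --- it makes the $O(T)$ remainder in the divisor sum negligible against $M\log T$. One further caveat: for a genuinely arbitrary increasing $\Phi$ the average only sits between $\asymp\Phi(T)\log T$ and $\asymp\Phi(2T)\log T$, so to avoid a regularity assumption the positive-limit half is cleanest obtained straight from Theorem~2 (which needs no regularity, only $\int_1^\infty\frac{\log\Lambda(x)}{x^2}dx<\infty$ with $\Lambda(x)=\sum_{n\le e^x}d(n)\Phi(n)/n$, upgrading from $L^1$ to $L^2$ by Cauchy--Schwarz), while the density half follows after first replacing $\Phi$ by a slowly varying increasing minorant $\tilde\Phi\le\Phi$ for which $\sum_n\tfrac{d(n)\log\tilde\Phi(n)}{n(\log n)^3}$ still diverges.
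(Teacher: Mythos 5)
Your proof is correct and follows the route the paper intends: the paper gives no details and simply asserts that Corollary~2 ``follows directly from Theorem~5,'' and your instantiation $A_n=d(n)\Phi(n)$, the short-interval divisor average (valid because $M\log T/T\to\infty$ on the admissible range), the reduction $\log(\Phi(n)\log n)\leadsto\log\Phi(n)$, and the dyadic comparison showing $\sum_n\tfrac{\log\Phi(n)}{n\log^2 n}$ and $\sum_n\tfrac{d(n)\log\Phi(n)}{n\log^3 n}$ converge or diverge together are exactly the computations needed to make that assertion precise. Your closing caveat is also a genuine one the paper glosses over: the hypothesis $\tfrac1M\sum_{T}^{T+M}A_n\asymp\Psi(T)$ of Theorem~5 forces $\Phi(2T)\ll\Phi(T)$, which an arbitrary increasing $\Phi$ need not satisfy, so to cover all cases one does want to fall back on Theorem~2 (plus Cauchy--Schwarz) for the ``limit positive'' half and on a regularized increasing minorant $\tilde\Phi\le\Phi$ with still-divergent series for the density half; the existence of such a $\tilde\Phi$ (e.g.\ $\tilde\Phi(T)=\min_{S\le T}\Phi(S)\,T/S$) is the one small point you state rather than verify, but it is true and routine.
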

In fact most known cases of arithmetic functions can be used as coefficients, divisor coefficients $d_k(n)$, Fourier coefficients of cusp-forms and so forth.

\subsection{Shifted classical Dirichlet polynomials}
We will also show a theorem that has applications on the Hurwitz zeta-function.
\begin{thm} Suppose  $\alpha>0$, and that $\Phi(n)$ is an increasing positive function and $H>0$. Then 
\begin{gather*}\lim_{N \to \infty} \min_{|a_n| \leq \Phi(n)} \int_0^H \left|\alpha^{it-1}+\sum_{n=1}^N a_n (n+\alpha)^{it-1} \right|^2 dt > 0,  \\ \intertext{if and only if}
  \sum_{n=1}^\infty  \frac {\log{\Phi(n)}}{n \log^2 n}<\infty. 
\end{gather*}
\end{thm}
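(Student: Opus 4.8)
The plan is to reduce this theorem to Theorem 4, in the same spirit in which Theorem 5 was obtained from it. First I would strip off a unimodular factor. Since $\abs{\alpha^{it-1}}^2=\alpha^{-2}$ does not depend on $t$,
\[
 \int_0^H\abs{\alpha^{it-1}+\sum_{n=1}^N a_n(n+\alpha)^{it-1}}^2dt
 =\alpha^{-2}\int_0^H\abs{1+\sum_{n=1}^N\frac{\alpha a_n}{n+\alpha}\,e^{i\lambda_n t}}^2dt,\qquad \lambda_n:=\log(n+\alpha)-\log\alpha,
\]
and replacing the integrand by its complex conjugate, which preserves the integral, turns $e^{i\lambda_n t}$ into $e^{-i\lambda_n t}$. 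Putting $b_n=\alpha a_n/(n+\alpha)$, the constraint $\abs{a_n}\le\Phi(n)$ becomes $\abs{b_n}\le A_n:=\alpha\Phi(n)/(n+\alpha)$, so the minimum over $\abs{a_n}\le\Phi(n)$ is $\alpha^{-2}$ times the corresponding minimum over $\abs{b_n}\le A_n$, and the positive constant $\alpha^{-2}$ is irrelevant to positivity of the limit.

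Next I would apply Theorem 4 with $\lambda_0=0$, $\lambda_n=\log(n+\alpha)-\log\alpha$ for $n\ge1$, $A_0=1$, $A_n=\alpha\Phi(n)/(n+\alpha)$ for $n\ge1$, and $\varepsilon(x)=(\log(x+1))^{-1-\delta}$ with a fixed $\delta>0$, so that $\int_1^\infty\varepsilon(x)/x\,dx<\infty$. The Dirichlet condition is immediate, since $\log$ is strictly increasing, $\log\tfrac{1+\alpha}{\alpha}>0$, and $\lambda_n\to\infty$. The regularity hypothesis $\Lambda(X+Y)-\Lambda(X)\gg Y\Lambda(X)$ for $\varepsilon(X)\ll Y\ll1$ is checked exactly as in the passage to Theorem 5: these $\lambda_n$ differ from $\log n$ by $o(1)$, so the number of $n$ with $\lambda_n\in[X,X+Y]$ is $\asymp Ye^X$, and for such $n$, which satisfy $n\asymp\alpha e^X$, the coefficient $A_n$ is comparable to $\Phi(\alpha e^X)/e^X$; hence $\Lambda(X+Y)-\Lambda(X)\asymp Y\Phi(\alpha e^X)$, monotonicity of $\Phi$ entering just as before. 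Here $\Lambda(X)=\sum_{\lambda_n\le X}A_n=1+\alpha\sum_{1\le n\le\alpha(e^X-1)}\Phi(n)/(n+\alpha)$, and Theorem 4 yields that the limit in question is positive if and only if $\int_1^\infty\frac{\log\Lambda(x)}{x^2}\,dx<\infty$.

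It then remains to show that the last integral condition is equivalent to $\sum_{n\ge1}\frac{\log\Phi(n)}{n\log^2 n}<\infty$. Writing $M=M(x)=\lfloor\alpha(e^x-1)\rfloor$, one has $x=\log M+\Oh{1}$ as $x\to\infty$ and $\Lambda(x)\asymp\sum_{n\le M}\Phi(n)/n$; since $\Phi$ is increasing, $\Phi(M/2)\ll\sum_{n\le M}\Phi(n)/n\ll\Phi(M)\log M$, so $\log\Phi(M/2)+\Oh{1}\le\log\Lambda(x)\le\log\Phi(M)+\Oh{\log x}$. Because $\int_1^\infty(\log x)\,x^{-2}dx<\infty$, and because the substitution $u=M(x)$, for which $x\asymp\log u$ and $dx\asymp du/u$, together with $\log(2v)\asymp\log v$, gives $\int\frac{\log\Phi(M(x)/2)}{x^2}dx\asymp\int\frac{\log\Phi(M(x))}{x^2}dx\asymp\int\frac{\log\Phi(u)}{u\log^2 u}du\asymp\sum_n\frac{\log\Phi(n)}{n\log^2 n}$, again using that $\Phi$ is increasing, the two criteria agree.

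The step I expect to require the most care is the verification of the regularity hypothesis of Theorem 4 for the perturbed frequencies $\log(n+\alpha)-\log\alpha$, which is exactly where the hypothesis that $\Phi$ is increasing is used; together with the equivalence of the two convergence criteria this is, however, a line-for-line adaptation of what was done for Theorem 5, the only genuinely new ingredients being the harmless shift $n\mapsto n+\alpha$ and the unimodular factor $\alpha^{it}$ removed at the outset, so I do not anticipate any essential new difficulty beyond routine bookkeeping.
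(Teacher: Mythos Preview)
Your proposal is correct and follows essentially the same route as the paper: the paper's own proof of this theorem consists of the single sentence ``This follows from Theorem~4 with $\lambda_n=\log(n+1+\alpha)-\log(\alpha)$,'' and your argument is precisely that reduction, carried out with the bookkeeping (the normalisation by $\alpha^{it-1}$, the choice $A_n=\alpha\Phi(n)/(n+\alpha)$ and $\varepsilon(x)=(\log(x+1))^{-1-\delta}$, the verification of the regularity hypothesis, and the equivalence of the integral and series criteria) that the paper leaves implicit.
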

\begin{proof}
  This follows from Theorem 4 with $\lambda_n=\log(n+1+\alpha)-\log(\alpha)$. 
 \end{proof}

\section{Applications on zeta-functions}

\subsection{Application on the Riemann zeta-function}

\begin{thm}
  Suppose   $\omega (t)\leq 1$ is an increasing function
such that
$$\int_2^\infty \frac{1-\omega(t)}{t \log t} dt < \infty.$$
 Then for each $\delta>0$ there exists a $C_\delta>0$ such that
$$
 \int_{T}^{T+ \delta} |\zeta(\sigma+it)| dt \geq C_\delta, \qquad  \omega(T) \leq \sigma.
$$
\end{thm}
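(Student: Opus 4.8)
The plan is to reduce the statement to Theorem 1 (equivalently Theorem 5 with $\Phi(n)=1$, or more precisely an appropriate comparison of growth rates) via the standard Dirichlet-polynomial approximation of $\zeta(s)$ in the critical strip. First I would recall the classical approximation: for $1/2 < \sigma < 1$ one has $\zeta(\sigma+it) = \sum_{n \le N} n^{-\sigma-it} + O(N^{-\sigma} + N^{1-\sigma}/|t|)$ with $N \asymp t$, so that on a short interval $[T, T+\delta]$ the zeta-function is uniformly within $\epsilon$ of a Dirichlet polynomial $P_T(t) = \sum_{n=1}^{N} n^{-\sigma - it}$ with $N \asymp T$, provided $\sigma$ is bounded away from $1/2$. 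The point is that the leading term is the constant $1$ (the $n=1$ summand), and the remaining coefficients have modulus $n^{-\sigma} \le n^{-\omega(T)} \le n^{\omega(N)-1}$ once $N$ and $T$ are comparable and $\omega$ is increasing.

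Next I would set up the contrapositive. Suppose $\int_2^\infty \frac{1-\omega(t)}{t\log t}\,dt < \infty$. Put $\Phi(n) = n^{1-\omega(c n)}$ for a suitable constant $c$ absorbing the implied constant in $N \asymp T$; then $\log \Phi(n) = (1-\omega(cn))\log n$, and the substitution $t = cn$ shows
\begin{gather*}
\sum_{n=2}^\infty \frac{\log \Phi(n)}{n \log^2 n} = \sum_{n=2}^\infty \frac{1-\omega(cn)}{n \log n} \asymp \int_2^\infty \frac{1-\omega(t)}{t \log t}\,dt < \infty,
\end{gather*}
so $\Phi$ satisfies the convergence condition $(*)$. (One should check $\Phi$ is increasing, or at least dominate it by an increasing function with a convergent sum, which is routine since $\omega$ is increasing and bounded by $1$.) By Theorem 1 applied with this $\Phi$ and the interval length $H = \delta$, there is a $\delta' > 0$ with
\begin{gather*}
\int_0^\delta \Bigl| 1 + \sum_{n=2}^N a_n n^{it-1}\Bigr|^2 dt \ge \delta'
\end{gather*}
for all $N$ and all $|a_n| \le \Phi(n)$. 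Now given $T$, apply the approximation above on $[T,T+\delta]$ with $\sigma = \omega(T)$: after a change of variables $t \mapsto T + t$ the polynomial $P_T$ has constant term $1$ and coefficients $a_n = n^{-iT}\cdot n^{1-\sigma}\cdot n^{-1}$ with $|n^{1-\sigma}| = n^{1-\omega(T)} \le \Phi(n)$ (for $T$ large enough that $cN \ge n$ across the relevant range — this is where the constant $c$ is chosen). Hence $\int_T^{T+\delta}|P_T(t)|^2 dt \ge \delta'$, and since $\|\zeta - P_T\|_{L^2(T,T+\delta)}$ can be made less than $\sqrt{\delta'}/2$ for $T$ large, the triangle inequality in $L^2$ gives $\int_T^{T+\delta}|\zeta(\omega(T)+it)|^2 dt \gg \delta'$; Cauchy–Schwarz then converts the $L^2$ bound into the desired $L^1$ bound $\int_T^{T+\delta}|\zeta(\sigma+it)|\,dt \ge C_\delta$ for $\sigma \ge \omega(T)$ (using that $|\zeta|$ is only larger, after a further minor argument, when $\sigma$ increases — or simply rerun the approximation with the larger $\sigma$, which only shrinks the coefficients). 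Finitely many small $T$ are handled trivially since $\zeta$ has no zeros on $\Re(s) \ge \omega(T) > 1/2$ away from... actually on a fixed compact range of $T$ the integral is a continuous positive function of $T$, hence bounded below.

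The main obstacle I anticipate is matching the approximation parameters cleanly: the classical bound needs $\sigma$ bounded away from $1/2$, whereas $\omega(T)$ could a priori dip low — but the convergence of $\int \frac{1-\omega(t)}{t\log t}dt$ forces $\omega(t) \to 1$, so $\omega(T) > 3/4$ for $T$ large, and the finitely many remaining $T$ are no trouble; still, one must be careful that for those large $T$ the error term $N^{1-\sigma}/T$ with $N \asymp T$ is genuinely small, which requires $\sigma$ not too close to $1/2$ and here $\sigma$ is close to $1$, so this is fine. The second delicate point is ensuring the coefficient bound $|a_n| = n^{1-\omega(T)} \le \Phi(n) = n^{1-\omega(cn)}$ holds for all $n \le N \asymp T$ simultaneously: since $\omega$ is increasing and $n \le N \le cT$ roughly, we need $\omega(cn) \le \omega(T)$, i.e. $cn \le T$, which holds for $n$ up to $T/c$; choosing the truncation length $N$ slightly below $T/c$ (still $\asymp T$, so the approximation error is unaffected) closes this gap. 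Everything else is a bookkeeping exercise with the triangle and Cauchy–Schwarz inequalities.
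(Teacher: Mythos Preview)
Your overall strategy---approximate $\zeta(\sigma+it)$ on $[T,T+\delta]$ by the truncated Dirichlet polynomial $\sum_{n\le N} n^{-\sigma-it}$ with $N\asymp T$, bound its non-constant coefficients by $\Phi(n)=n^{1-\omega(cn)}$, verify that the hypothesis on $\omega$ translates into $\sum \frac{\log\Phi(n)}{n\log^2 n}<\infty$, and invoke the lower bound of Theorem 1---is exactly the paper's argument. The parameter-matching discussion (choosing $N$ just below $T/c$ so that $\omega(cn)\le\omega(T)$ for every $n$ in the sum, and handling finitely many small $T$ by compactness) is also right and in fact more carefully spelled out than in the paper.

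There is, however, one genuine slip. You write that ``Cauchy--Schwarz then converts the $L^2$ bound into the desired $L^1$ bound.'' Cauchy--Schwarz goes the other way: on an interval of length $\delta$ it gives $\|f\|_{L^1}\le \sqrt{\delta}\,\|f\|_{L^2}$, so an $L^2$ lower bound does \emph{not} yield an $L^1$ lower bound. (Trying instead $\|f\|_{L^2}^2\le \|f\|_{L^1}\|f\|_{L^\infty}$ fails too, since $\sup_{[T,T+\delta]}|\zeta|$ is unbounded in $T$.) The paper addresses this point explicitly: the lower-bound half of Theorem 1 comes from Theorem 2, which is already stated and proved for the $L^1$ integral $\int_0^H\bigl|1+\sum a_n e^{-i\lambda_n t}\bigr|\,dt$. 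So you should invoke Theorem 2 directly to get a uniform $L^1$ lower bound for the approximating polynomial, and then the triangle inequality in $L^1$ (not $L^2$) finishes the job. With that correction your proof is complete and matches the paper's.
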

\begin{proof}
The idea is simply to approximate $\zeta(s)$ with the Dirichlet polynomial (Ivic \cite{Ivic}, Theorem 1.8)
\begin{gather}\label{y2} \zeta(\sigma+it)=\sum_{1 \leq n <T} n^{-it-\sigma} + o(1),\end{gather}
where $$n^{1-\omega(T)} \ll \Phi(n)=e^{(1-\omega(n)) \log(n)},$$ and then use Theorem 1. We remark that while Theorem 1 is stated in $L^2$-norm, in the case when the sum is 
is convergent and the integral is bounded from below by a positive constant, the theorem is true also in $L^1$-norm. This is because Theorem 2 is true in $L^1$-norm, and it is Theorem 2 that is used to prove the lower bound in Theorem 1.
\end{proof}

Examples include the following cases: 
\begin{gather*}
 \int_{T}^{T+ \delta} \abs{\zeta \p{1- (\log \log T)^{-1-\epsilon}+it}} dt \geq C_\delta, \\ 
\int_{T}^{T+ \delta} \abs{\zeta \p{1-(\log \log \log T)^{-1-\epsilon} (\log \log T)^{-1}+it}} dt \geq C_\delta, \\ \intertext{and}
\int_{T}^{T+ \delta} \abs{\zeta \p{1 -(\log \log \log \log T)^{-1-\epsilon}(\log \log \log T)^{-1} (\log \log T)^{-1}+it}} dt \geq C_\delta.
\end{gather*}

By using some ideas of this paper, but by using a test-function of Ramachandra instead of  the test-functions of Paley and Wiener, we will be able to improve on Theorem 7. For details,  see our forthcoming paper \cite{Andersson4}, where we show that we can choose $\omega(T)=1-C\delta/\log \log T$, for any positive constant $C<\pi/4$.

\subsection{Applications on other Dirichlet series}

We remark that  the corresponding results for the the Dirichlet L-functions (and any element in the Selberg class as well) also follows by the same argument. We will  state the Theorem that corresponds to Theorem 7 for the Hurwitz zeta-function.

\begin{thm}
  Suppose   $\alpha>0$ and that $\omega (t)\leq 1$ is an increasing function
such that
$$\int_2^\infty \frac{1-\omega(t)}{t \log t} dt < \infty.$$
 Then for each $\delta>0$ there exists a $C_\delta>0$ such that
$$
 \int_{T}^{T+ \delta} |\zeta(\sigma+it,\alpha)| dt \geq C_\delta, \qquad  \omega(T) \leq \sigma.
$$
\end{thm}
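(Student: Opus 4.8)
The plan is to mirror the proof of Theorem 7, replacing the classical approximation by the Euler--Maclaurin formula for the Hurwitz zeta function and feeding the resulting normalized Dirichlet polynomial into Theorem 2. Since the lower bound we need is exactly the content of Theorem 2 --- which is already an $L^1$-statement and, crucially, places no monotonicity requirement on the coefficient bounds $A_n$ --- I would invoke Theorem 2 directly rather than Theorem 6; this sidesteps having to check that the natural coefficient majorant is increasing.

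\emph{Step 1 (approximation).} I would start from the truncated Euler--Maclaurin formula
\[
 \zeta(\sigma+it,\alpha)=\sum_{0\le n<x}(n+\alpha)^{-\sigma-it}+\frac{(x+\alpha)^{1-\sigma-it}}{\sigma-1+it}+\Oh{x^{-\sigma}},\qquad x\asymp\abs{t},
\]
valid for $\sigma$ bounded away from $0$; this is the relevant regime, since the hypothesis $\int_2^\infty\frac{1-\omega(t)}{t\log t}\,dt<\infty$ forces $\omega(t)\to1$, so $\sigma\ge\omega(T)\ge\tfrac12$ once $T$ is large. With $x=T$ both error terms are $\Oh{T^{-\sigma}}=o(1)$ uniformly as $T\to\infty$. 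Passing to the complex conjugate (moduli are unchanged) and factoring out the $n=0$ term $\alpha^{-\sigma+it}$ as in the proof of Theorem 6, one gets
\[
 \alpha^{\sigma}\,\bigl|\zeta(\sigma+it,\alpha)\bigr|=\Bigl|1+\sum_{1\le n<T}\Bigl(\tfrac{\alpha}{n+\alpha}\Bigr)^{\sigma}e^{\,i\mu_n t}+o(1)\Bigr|,\qquad \mu_n=\log\tfrac{n+\alpha}{\alpha},
\]
where $\alpha^{\sigma}$ is bounded above and below by positive constants on the relevant range of $\sigma$, and $0=\mu_0<\mu_1<\cdots$ satisfies the Dirichlet condition.

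\emph{Step 2 (the coefficient bound and the Paley--Wiener condition).} For $n<T$ we have $\omega(n)\le\omega(T)\le\sigma$, and since $\tfrac{\alpha}{n+\alpha}<1$ this gives $\bigl(\tfrac{\alpha}{n+\alpha}\bigr)^{\sigma}\le\bigl(\tfrac{\alpha}{n+\alpha}\bigr)^{\omega(n)}=:A_n$, a bound independent of $T$ and $\sigma$; set $A_0=1$. To apply Theorem 2 with $\lambda_n=\mu_n$ and these $A_n$ I must verify $\int_1^\infty x^{-2}\log\Lambda(x)\,dx<\infty$, where $\Lambda(x)=\sum_{\mu_n\le x}A_n$. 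Writing $A_n=\alpha^{\omega(n)}(n+\alpha)^{-1}(n+\alpha)^{1-\omega(n)}$ and using $\omega(n)\le1$ and $\sum_{n\le Y}(n+\alpha)^{-1}\ll\log Y$,
\[
 \log\Lambda(x)\ \le\ \log x+\max_{1\le m\le Ce^{x}}(1-\omega(m))\log\tfrac{m+\alpha}{\alpha}+\Oh{1}\ \le\ \log x+\int_2^{\,Ce^{x}}\frac{1-\omega(u)}{u}\,du+\Oh{1},
\]
the last step because $1-\omega$ is decreasing, so $(1-\omega(m))\log m\le\int_2^m\frac{1-\omega(u)}{u}\,du+\Oh{1}$. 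Interchanging the order of integration (with $\int_v^\infty x^{-2}\,dx=v^{-1}$) then gives
\[
 \int_1^\infty\frac{\log\Lambda(x)}{x^2}\,dx\ \ll\ 1+\int_2^\infty\frac{1-\omega(u)}{u\log u}\,du\ <\ \infty .
\]
Hence Theorem 2 produces a $c_\delta>0$ with $\int_0^\delta\bigl|1+\sum_{n=1}^Na_n e^{\,i\mu_n u}\bigr|\,du\ge c_\delta$ for every $N$ and every choice $\abs{a_n}\le A_n$.

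\emph{Step 3 (conclusion).} Finally, for $t=T+u$ with $u\in[0,\delta]$ one has $e^{\,i\mu_n(T+u)}=e^{\,i\mu_nT}e^{\,i\mu_nu}$, so, absorbing the unimodular phases $e^{\,i\mu_nT}$ into the coefficients, Step 1 exhibits $\alpha^{\sigma}\abs{\zeta(\sigma+iT+iu,\alpha)}$ as $\bigl|1+\sum_{1\le n<T}b_ne^{\,i\mu_nu}+o(1)\bigr|$ with $\abs{b_n}\le A_n$. Integrating over $u\in[0,\delta]$, absorbing the $o(1)$ for $T\ge T_0$, and dividing by the bounded factor $\alpha^{\sigma}$, yields $\int_T^{T+\delta}\abs{\zeta(\sigma+it,\alpha)}\,dt\ge C_\delta>0$ for all $T\ge T_0$; the finitely many remaining $T<T_0$ are handled by continuity together with the fact that $\zeta(s,\alpha)$ vanishes on no interval of the line $\Re(s)=\sigma$ (as usual one restricts $\sigma$ to a bounded range $\omega(T)\le\sigma\le\sigma_1$, which is the only case of interest, exactly as in Theorem 7).

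The only non-routine point is Step 2: exhibiting a single $T$-independent majorant $A_n$ for the coefficients and verifying the Paley--Wiener condition $\int_1^\infty x^{-2}\log\Lambda(x)\,dx<\infty$ for it. The key is that the \emph{monotonicity of $\omega$} turns the elementary bound $(1-\omega(m))\log m\le\int_2^m\frac{1-\omega(u)}{u}\,du+\Oh{1}$ into precisely what is needed so that, after swapping the order of integration, the condition collapses to the hypothesis $\int_2^\infty\frac{1-\omega(t)}{t\log t}\,dt<\infty$. Everything else is the bookkeeping already carried out in the proof of Theorem 7.
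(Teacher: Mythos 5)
Your proof is correct and follows the paper's intended route --- truncated Dirichlet-series approximation of the Hurwitz zeta-function, normalization by the $n=0$ term, then the $L^1$ lower-bound theorem --- with all the details the paper compresses into ``follows from Theorem 6 as Theorem 7 from Theorem 1'' filled in carefully (the Euler--Maclaurin expansion, the $T$-independent majorant $A_n=(\alpha/(n+\alpha))^{\omega(n)}$, and the verification of $\int_1^\infty x^{-2}\log\Lambda(x)\,dx<\infty$ via the monotonicity of $\omega$ and a swap of integration order). Invoking Theorem 2 directly rather than Theorem 6 is a legitimate, slightly cleaner variant of the same argument; it has the small advantage of not requiring the coefficient majorant to be monotone, which the natural choice $(n+\alpha)^{1-\omega(n)}$ need not be, whereas the Theorem~1/5/6 chain formally assumes an increasing $\Phi$.
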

\begin{proof}
 This follows from Theorem 6 in the same way as Theorem 7 follows from Theorem 1.
\end{proof}
We remark that Theorem 8 also can be stated and proved for the Lerch zeta-function (an analogue of eq. \eqref{y2} is true for the Lerch zeta-function also  \cite[Theorem 1.2]{GarLau}), and it follows from Theorem 6 by the same method.
 We also remark here that this result is important since the Hurwitz zeta-function does not have an Euler-product and our other methods that requires Euler products (e.g. \cite{Andersson3}) do not apply. 
 \begin{cor} Let $\alpha>0$. Then
\begin{gather*} 
  \int_{T}^{T+\delta} \abs{\zeta\p{1+it,\alpha}}dt \geq C_\delta>0,
\end{gather*}
\end{cor}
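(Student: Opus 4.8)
The plan is to obtain this corollary as the endpoint case $\sigma=1$ of Theorem 8, with no additional work. First I would apply Theorem 8 with the constant weight $\omega(t)\equiv 1$. This $\omega$ is admissible: it is (weakly) increasing, satisfies $\omega(t)\le 1$, and the hypothesis $\int_2^\infty \frac{1-\omega(t)}{t\log t}\,dt<\infty$ holds trivially since the integrand vanishes identically. Theorem 8 then yields, for every $\delta>0$, a constant $C_\delta>0$ with $\int_T^{T+\delta}\abs{\zeta(\sigma+it,\alpha)}\,dt\ge C_\delta$ for all $T$ whenever $\sigma\ge\omega(T)=1$. Specializing to $\sigma=1$ gives precisely the asserted bound, uniformly in $T$, and that is the whole proof.

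Alternatively, and essentially equivalently, one can argue straight from Theorem 6. Truncating on the line $\Re(s)=1$ via the analogue of \eqref{y2} for the Lerch (hence Hurwitz) zeta-function cited above gives $\zeta(1+it,\alpha)=\sum_{0\le n<T}(n+\alpha)^{-1-it}+o(1)$, so a $T$-independent lower bound for $\int_T^{T+\delta}\abs{\zeta(1+it,\alpha)}\,dt$ would follow once we know that $\alpha^{it-1}+\sum_{n=1}^N a_n(n+\alpha)^{it-1}$ cannot be made arbitrarily small in $L^1(0,\delta)$ subject to $\abs{a_n}\le\Phi(n)\equiv 1$. Here $\Phi\equiv 1$ corresponds to $A_n\equiv 1$ in Theorem 6, the divergence criterion reads $\sum_n \frac{\log\Phi(n)}{n\log^2 n}=0<\infty$, and so Theorem 6 (in its $L^1$ form, which comes for free from the use of Theorem 2 in its proof, exactly as remarked in the proof of Theorem 7) delivers the required positive lower bound.

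I do not expect a genuine obstacle here: the content is entirely absorbed by Theorems 6 and 8, and the corollary is just their instantiation at $\sigma=1$ with the trivial weight. The one point deserving a line of care is that the boundary value $\sigma=1$ is genuinely within reach — the hypothesis $\sigma\ge\omega(T)$ with $\omega\equiv 1$ forces $\sigma=1$ exactly, and on that line the Dirichlet-polynomial truncation underlying the proof of Theorem 8 is still valid — but this is immediate from the approximate formula quoted above.
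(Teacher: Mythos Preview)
Your proposal is correct and matches the paper's approach: the corollary is placed immediately after Theorem~8 without a separate proof, as it is just the specialization $\omega\equiv 1$, $\sigma=1$ of that theorem, exactly as you argue. Your alternative derivation directly from Theorem~6 is likewise valid and is in fact the route by which Theorem~8 itself is obtained.
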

This is the first result that shows a a lower bound in this problem for the Hurwitz zeta-function that is independent of $T$, and thus that the Hurwitz zeta-function is not  universal on the line $\Re(s)=1$. For the special case of the Riemann zeta-function and  for an argument for why this implies non universality and what universality on a line means, see the discussion in \cite[pp. 5-6]{Andersson3}.

\bibliographystyle{plain}

\end{document}